\theoremstyle{plain}
\newtheorem{theorem}{Theorem}[section]
\newtheorem{proposition}[theorem]{Proposition}
\newtheorem{definition}[theorem]{Definition}
\newtheorem{example}[theorem]{Example}
\newtheorem{lemma}[theorem]{Lemma}
\newtheorem{remark}[theorem]{Remark}
\newcommand{\mc}{\mathcal}  
\newcommand{\Z}{\mathbb{Z}}   
\newcommand{\N}{\mathbb{N}}
\DeclareMathOperator{\FV}{FV}
\DeclareMathOperator{\kernel}{\mathsf{Kernel}}
\DeclareMathOperator{\diam}{\mathsf{diam}}
\begin{document}
\title[]{Finiteness of Homological Filling Functions}
\author{Joshua W. Fleming and Eduardo Mart\'inez-Pedroza}
 \address{Memorial University\\ St. John's, Newfoundland, Canada A1C 5S7}
 \email{emartinezped@mun.ca, jwf572@mun.ca}

\begin{abstract}
Let $G$ be a group. For any $\Z G$--module $M$ and any integer $d>0$, we define a function $\FV_{M}^{d+1}\colon \N \to \N\cup \{\infty\}$ generalizing the notion of $(d+1)$--dimensional filling function of a group.  We prove that this function takes only finite values if $M$ is of type $FP_{d+1}$ and $d>0$,  and remark that the asymptotic growth class of this function is an invariant of $M$.  In the particular case that $G$ is a group of type $FP_{d+1}$, our main result implies that its $(d+1)$-dimensional homological filling function takes only finite values, addressing a question from~\cite{HaMa14}. 
%In the particular case that $G$ is a group of type $FP_{d+1}$, it was known that  its  $(d+1)$-dimensional homological filling function takes only finite values in the case that $d\neq 2$. Our main result implies that this is also the case for $d=2$  answering a question from~\cite{HaMa14}.
\end{abstract}

\maketitle

\section{Introduction}

For a contractible cellular complex $X$ and an integer $d>0$, the homological filling function $\FV_X^{d+1}\colon \N \to \N$  measures the difficulty of filling cellular $d$-cycles with $(d+1)$--chains, a precise definition is below. They are higher dimensional homological generalizations of isoperimetric functions.  For a group $G$ admitting a compact classifying space $K(G,1)$ with universal cover $X$, the equivalence growth rate of the function $\FV_X^{d+1}$ provides an invariant of the group.  The initial motivation of this work was to provide a direct argument that
$\FV_X^{d+1}$ takes only finite values for such complex $X$, addressing what the authors perceived as a gap in the literature. 
%Indeed, the only proof in the literature that the authors are aware of appears in~\cite{HaMa14}, this argument relies on non-trivial homotopical results, and it does not cover the case that $d=2$.  
In this article we provide a self-contained proof based on the algebraic approach to define the homological filling functions from~\cite{HaMa14}, and on our way, we prove a more general result that defines a new collection of invariants for $\Z G$--modules.

\subsection*{The topological perspective.} We assume all spaces are combinatorial complexes and all maps are combinatorial, see for example~\cite[Ch.I Appendix]{BrHa99}.  A $G$-action on a complex $X$ is \emph{proper} if for all compact subcomplexes $K$ of $X$ the collection $\{g\in G \colon K \cap g(K) \neq \emptyset \}$ is finite.  The $G$-action is \emph{cocompact} if there is a compact subcomplex $K$ of $X$ such that the collection $\{gK\colon g\in G\}$ covers $X$.  For a complex $X$, the cellular $d$--dimensional chain group $C_d(X, \Z)$  is a free $\Z$-module with a natural $\ell_1$-norm induced by a basis formed by the collection of all $d$--dimensional cells of $X$, each cell with a chosen orientation from each pair of opposite orientations. This norm, denoted by $\|\cdot\|_1$, is the sum of the absolute value of the coefficients in the unique representation of the chain  as a linear combination of elements of the basis. Let $Z_d(X,\Z)$ denote the $\Z$--module of integral $d$--cycles, and $\partial_{d+1}\colon C_{d+1}(X,\Z) \to Z_d(X,\Z)$ be the boundary map.  The \emph{$(d+1)$--dimensional filling function of $X$} is the function $\FV_X^{d+1} \colon\N \to \N \cup \{\infty\}$ defined as 
\[\FV_X^{d+1} (k) = \sup \left \{ \ \| \gamma   \|_{\partial}  \colon \gamma \in Z_d(X, \Z), \ \| \gamma \|_1 \leq k \ \right \},\]
where 
\[ \| \gamma  \|_\partial  = \inf \left \{ \ \| \mu \|_1 \colon \mu \in C_{d+1}(X, \Z), \ \partial ( \mu ) = \gamma \ \right \},\]
where the supremum and infimum of the empty set are defined as zero and $\infty$ respectively. In words,  $\FV^{d+1}_{X}(k)$ is the most efficient upper bound on the size of fillings by $(d+1)$--chains of $d$--cycles of norm at most $k$. A complex $X$ is \emph{$d$-acyclic} if the reduced homology groups $\bar H_i (X, \Z)$ are trivial for $0\leq i\leq d$.
As mentioned above, the initial motivation of this work was to provide a proof of Theorem~\ref{thm:main} which the authors perceived as a gap in the literature.  The main contribution of this note is a generalization to an algebraic framework of the following statement, see Theorem~\ref{thm:main2}.

\begin{theorem}\label{thm:main}
Let $d$ be a positive integer and let $G$ be a group acting properly and cocompactly by cellular automorphisms on an $d$--acyclic complex $X$. Then $FV_X^{d+1}(m)$ is finite for all $m\in\N$. 
\end{theorem}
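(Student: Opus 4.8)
\emph{Plan.} The plan is to reduce the assertion to finitely many finite-dimensional linear-algebra estimates, using that the support of a $d$-cycle of bounded norm spans a subcomplex with a bounded number of cells, and that such subcomplexes fall into finitely many $G$-orbits. (The theorem can also be deduced from a module-theoretic statement applied to $M=\Z$; I sketch a direct argument.) Observe first that, since $X$ is $d$-acyclic with $d\ge 1$, it is connected and $H_d(X,\Z)=0$; and properness together with cocompactness makes $X$ locally finite with finitely many $G$-orbits of cells, while $G$ acts on each $C_i(X,\Z)$ by signed permutations of the basis of cells, hence $\|\cdot\|_1$-isometrically and commuting with $\partial$. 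Fix a finite subcomplex $K$ with $GK=X$.

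\emph{Step 1 (a local filling estimate).} I would first show that for every finite subcomplex $L\subseteq X$ there is a constant $D_L$ such that $\|z\|_\partial\le D_L\|z\|_1$ for every $d$-cycle $z$ supported in a $G$-translate of $L$. Indeed, $Z_d(L,\Z)$ is a finitely generated free abelian group; pick a $\Z$-basis $z_1,\dots,z_r$, and, using $H_d(X,\Z)=0$, write $z_a=\partial\mu_a$ with $\mu_a\in C_{d+1}(X,\Z)$ (of finite support, by definition of cellular chains). For $z=\sum_a b_az_a\in Z_d(L,\Z)$, each integer $b_a$ equals $\lambda_a(z)$ for a fixed linear functional $\lambda_a$ on $C_d(L,\Z)\otimes\mathbb{R}$ extending the $a$-th coordinate functional on the subspace $Z_d(L,\Z)\otimes\mathbb{R}$; being a functional on a finite-dimensional space, $\lambda_a$ is bounded, so $|b_a|\le\beta_a\|z\|_1$ for a constant $\beta_a$. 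Then $z=\partial\bigl(\sum_a b_a\mu_a\bigr)$ gives $\|z\|_\partial\le\bigl(\sum_a\beta_a\|\mu_a\|_1\bigr)\|z\|_1$; and for $z$ supported in $gL$ one applies the $\|\cdot\|_1$-isometry $g^{-1}$ to reduce to the case of $L$, with the same constant.

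\emph{Step 2 (decomposition and conclusion).} Fix $k\in\N$ and $\gamma\in Z_d(X,\Z)$ with $\|\gamma\|_1\le k$, and let $Y$ be the subcomplex spanned by the support of $\gamma$, which contains at most $k$ cells of dimension $d$. Since every $d$-cell has a uniformly bounded number of faces, $Y$ has at most $c_0 k$ cells for a constant $c_0=c_0(X)$. Write $Y=Y_1\sqcup\cdots\sqcup Y_m$ as a disjoint union of connected subcomplexes, and let $\gamma_t$ denote $\gamma$ with its coefficients restricted to the $d$-cells of $Y_t$; then $\gamma=\sum_t\gamma_t$, each $\gamma_t\in Z_d(Y_t,\Z)$, and $\sum_t\|\gamma_t\|_1=\|\gamma\|_1$. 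Each $Y_t$ is connected with at most $c_0 k$ cells, so it lies in the ball of radius $c_0 k$ about one of its vertices in $X^{(1)}$; by local finiteness and cocompactness there are, up to the $G$-action, only finitely many connected subcomplexes of $X$ with at most $c_0 k$ cells, say $L_1,\dots,L_p$ (depending on $k$). Hence each $Y_t$ is a $G$-translate of some $L_{j(t)}$, and with $D:=\max_{1\le j\le p}D_{L_j}$ and subadditivity of $\|\cdot\|_\partial$ we obtain $\|\gamma\|_\partial\le\sum_t\|\gamma_t\|_\partial\le D\sum_t\|\gamma_t\|_1=D\|\gamma\|_1\le Dk$. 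Therefore $\FV_X^{d+1}(k)\le Dk<\infty$.

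\emph{Main obstacle.} The delicate step is Step 2: a $d$-cycle of bounded $\ell_1$-norm need not be contained in any fixed compact subcomplex — its support can be smeared across $X$ — so one cannot simply translate it into $K$ and apply Step 1. Splitting $\gamma$ along the connected components of the subcomplex it spans, together with the observation that bounded subcomplexes form finitely many $G$-orbits, is what makes the reduction go through, at the price of a constant $D=D(k)$ growing with $k$; this is exactly the flexibility that finiteness of $\FV_X^{d+1}$ — as opposed to a uniform linear isoperimetric bound — permits. The remaining ingredients are elementary linear algebra over $\Z$ and the norm-preserving, $\partial$-commuting action of $G$ on the chain groups.
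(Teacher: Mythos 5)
Your proof is correct and reaches the same kind of bound, $\FV^{d+1}_X(k)\le D(k)\cdot k$, but its route differs from the paper's in two places. Both arguments decompose the cycle and exploit that the pieces fall into finitely many $G$-orbits, but where the paper decomposes $\gamma$ into \emph{connected $d$-cycles} with additive $\ell_1$-norms (a finer decomposition --- in the algebraic proof via $\rho$-connectedness, Proposition~\ref{PROP:ML}, it may split the chain even within a single support component), you split $\gamma$ along the \emph{connected components} of the subcomplex spanned by its support, which is cruder but makes it immediate that the pieces are again cycles, since their boundaries have pairwise disjoint supports. And where the paper bounds each piece by acyclicity plus a finite maximum over orbit representatives of bounded-norm connected cycles, you prove a \emph{local linear filling estimate} $\|z\|_\partial\le D_L\|z\|_1$ for cycles supported in $G$-translates of a fixed finite subcomplex $L$, via a $\Z$-basis of $Z_d(L,\Z)$, explicit fillings of the basis elements, and boundedness of coordinate functionals on a finite-dimensional normed space. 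Your local estimate is slightly more work than the paper's ``take the max over a finite set,'' but it buys a clean linear bound per component; what the paper's route buys is generality: $\rho$-connectedness makes sense for arbitrary morphisms of permutation $\Z G$-modules, which is what drives Proposition~\ref{MT:1} and Theorem~\ref{thm:main2}, whereas your ``support subcomplex'' decomposition is inherently topological and proves Theorem~\ref{thm:main} but not the module-theoretic generalization. Two minor points: in Step~2 the ball radius should be $Cc_0k$ rather than $c_0k$, with $C$ an upper bound on the $X^{(1)}$-diameter of a cell; and finiteness of $G$-orbits of bounded connected subcomplexes uses local finiteness of $X$, which does follow from properness plus cocompactness but deserves a word. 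Neither affects the argument.
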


Theorem~\ref{thm:main} was known to hold in the following cases.
\begin{itemize}

\item  For $d=1$, it is a result of Gersten~\cite[Proposition 2.4]{Ge99}.

\item  For $d\geq 1$ and under the extra assumption that $G$ admits a combing, it follows from work of Epstein and Thurston~\cite[Theorem 10.3.6]{ECHLT92}; see also the recent work of Behrstock and Dru{\c{t}}u~\cite[Lemma 3.7]{BeDu15} and references therein. 

\item For $d\geq 3$, Hanlon and the second author observed in~\cite[Section 3.3]{HaMa14} that Theorem~\ref{thm:main} holds using results of Alonso, Pride and Wang~\cite{APW99} in conjuntion with an argument from Abrams, Brady, Dani and Young~\cite{ABDY13}.  The results in~\cite{APW99} rely on non-trivial machinery from homotopy theory.  The failure of the argument for $d=2$ relies on an application of the Hurewicz theorem, for details see~\cite[Section 3.3]{HaMa14} and the references therein.
\end{itemize}

Current results in the literature leave open the statement of Theorem~\ref{thm:main} for the case $d=2$. Our argument in this note proving Theorem~\ref{thm:main} does not rely on previous results, it is valid for all $d>0$, and it is elementary. The argument might be known to the experts, but to our knowledge does not appear in the literature, and this note fills this gap.  Let us sketch the argument from a topological perspective, for an an algebraic proof see Section~\ref{sec:tech}. 

\begin{proof}[Sketch of the proof of Theorem~\ref{thm:main}, from a topological perspective.]
Consider the combinatorial path metric on the $1$-skeleton of $X$, and 
for any $d$-cycle $\sigma$ (which is a formal finite sum of $d$-cells) define its diameter $\diam (\sigma)$ as the diameter of the set  consisting of vertices ($0$-cells) which are in the closure of at least one $d$-cell defining $\sigma$.  A $d$-cycle $\sigma$ is called \emph{connected} if the subcomplex of $X$ formed by taking the closure of the union of $d$-cells definining $\sigma$ is connected (and has no cut-points).

Let $m>0$.  Since $G$ acts properly and cocompactly on $X$,    
there is an integer $C\geq 0$ that bounds the diameter of any $d$-cell of $X$, and hence for any connected $d$-cycle $\sigma$
\[  \diam (\sigma ) \leq C \| \sigma \|_1.\]
From here, it follows that the induced $G$--action on the set of connected $d$-cycles with $\ell_1$-norm $\leq m$ has finitely many $G$--orbits.  Since $X$ is $d$-acyclic, $\|\sigma\|_\partial<\infty$ for each $d$-cycle $\sigma$.  Therefore, there is an integer $M=M(m)$, such that 
\[ \text{$\sigma$ is connected and  $\|\sigma\|_1\leq m$ } \quad  \Longrightarrow  \quad \|\sigma\|_\partial \leq M.\]

Let $\sigma$ be an arbitrary $d$-cycle with $\ell_1$-norm $\leq m$.  Then one shows that $\sigma$ can be decomposed as a sum of connected $d$-cycles $\sum_{i=1}^k \sigma_i$,  where $k \leq \|\sigma\|_1 = \sum_{i=1}^k \| \sigma_i \|_1$. Hence  
\[ \| \sigma \|_\partial \leq \sum_{i=1}^k \| \sigma_i \|_\partial \leq k \cdot M \leq m \cdot M. \]
Therefore $\FV_X^{d+1}(m) \leq m\cdot M<\infty$.
\end{proof}

\begin{remark}\label{rem:gpinvariance}
Under the assumptions of the Theorem~\ref{thm:main}, it is known that the growth rate of the function $\FV_X^{d+1}$ is a quasi-isometry invariant of the group $G$. This was first addressed by Fletcher in his PhD thesis~\cite[Theorem 2.1]{Fl98} under the   assumption that $X$ is the universal cover of a $K(G,1)$. In~\cite[Lemma 1]{Yo11}, Young provides a proof of the quasi-isometry invariance in the general context of Theorem~\ref{thm:main}. Notably, these works do not address that  these functions are finite. 
\end{remark}

\subsection*{The algebraic perspective, and our main result.}  Our main result is an algebraic analog of Theorem~\ref{thm:main}. Recall that for a group $G$,  a $\Z G$-module $M$ is of type $FP_n$ if there exists a partial resolution of $\Z G$-modules 
\begin{equation}\nonumber
P_n\xrightarrow{\varphi_n} P_{n-1} \xrightarrow{\varphi_{n-1}} \ldots \xrightarrow{\varphi_2} P_1 \xrightarrow{\varphi_1} P_0 \xrightarrow{} M \xrightarrow{} 0
\end{equation}
such that each $P_i$ is a finitely generated projective $\Z G$-module. For a $\Z G$-module $M$ of type $FP_{d+1}$ we define the {$(d+1)$--filling function $\FV^{d+1}_M$ of $M$}, see Definition~\ref{def:filling-module} in Section~2,   and prove the following result.

Recall that the \emph{growth rate class} of a function $\N \to \N$ is defined as follows.  Given two functions  $f,g: \mathbb{N}\to \mathbb{N}$, define the relation $f\preceq g$ if there is $C>0$ such that $f(n)\leq Cg(Cn+C)+Cn+C$ for all $n\in \mathbb{N}$; and let $f\sim g$ if both $f\preceq g$ and $g\preceq f$.  This yields an equivalence relation with the equivalence classes of a function $f$ called the \emph{growth rate class of $f$}.

\begin{theorem}\label{thm:main2} 
Let $M$ be a $\Z G$-module of type $FP_{d+1}$.
\begin{enumerate}
\item For all  positive integers $k$, $\FV_{M}^{d+1} (k ) < \infty$
\item The growth rate of the function $\FV_{M}^{d+1}\colon \N\to \N$ only depends on $M$.
\end{enumerate} 
\end{theorem}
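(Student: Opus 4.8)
The plan is to reduce the module-theoretic statement to the combinatorial heart of the topological sketch. First I would fix a partial projective resolution witnessing that $M$ is of type $FP_{d+1}$, and—since finitely generated projective modules are direct summands of finitely generated free modules—I would work with finitely generated \emph{free} $\Z G$-modules $F_{d+1} \xrightarrow{\varphi_{d+1}} F_d \xrightarrow{\varphi_d} \cdots$, each $F_i$ equipped with the $\ell_1$-norm coming from a chosen $\Z G$-basis. The role of cycles is played by $Z = \ker \varphi_d = \operatorname{im}\varphi_{d+1}$, the role of the boundary map by $\varphi_{d+1}$, and the filling norm $\|z\|_\partial$ is the infimal $\ell_1$-norm of a preimage under $\varphi_{d+1}$; this will be the content of Definition~\ref{def:filling-module}, and it is always finite because $Z = \operatorname{im}\varphi_{d+1}$.

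For part (1), the key is to extract, from the $\Z G$-module structure, the two finiteness inputs used in the topological argument. The analogue of ``bounded diameter of a $d$-cell'' is that each basis element $e$ of $F_d$ has a fixed preimage $\mu_e$ under $\varphi_{d+1}$ of some finite $\ell_1$-norm; the analogue of ``decomposing a cycle into connected pieces'' is more delicate and is where the real work lies. The step I expect to be the main obstacle is producing, for an arbitrary $z \in Z$ with $\|z\|_1 \le k$, a decomposition $z = \sum_{i=1}^{N} z_i$ with each $z_i \in Z$, with $N$ and each $\|z_i\|_1$ bounded in terms of $k$ only, and with only finitely many $G$-orbits of the $z_i$ occurring as $k$ ranges over a fixed value—this replaces the geometric notion of a connected subcomplex, which has no obvious algebraic meaning. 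I anticipate the right substitute is a \emph{support}-based argument: writing $z = \sum_j a_j g_j e_{t_j}$ in the free basis with $\sum |a_j| \le k$, one groups the terms $g_j e_{t_j}$ according to the equivalence relation generated by ``the supports of $\varphi_{d+1}(\mu_{g_j e_{t_j}})$ overlap,'' then translates each group back near the identity; after translation there are only finitely many possibilities because the total $\ell_1$-mass is bounded and the action is (algebraically) ``cocompact'' in the sense that $F_d, F_{d+1}$ are finitely generated over $\Z G$. Granting such a decomposition, finiteness follows exactly as in the sketch: a finite orbit-set of bounded cycles gives a uniform bound $M = M(k)$ on $\|z_i\|_\partial$, and then $\|z\|_\partial \le \sum_i \|z_i\|_\partial \le N \cdot M(k) < \infty$, whence $\FV_M^{d+1}(k) < \infty$.

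For part (2), I would show that the growth rate class of $\FV_M^{d+1}$ does not depend on the choice of partial resolution by finitely generated projectives. The standard tool is the uniqueness-up-to-homotopy of projective resolutions: given two such resolutions $(F_\bullet, \varphi_\bullet)$ and $(F'_\bullet, \varphi'_\bullet)$ of $M$, there are $\Z G$-chain maps between them inducing the identity on $M$, and these maps are given (in each degree, up to and including degree $d+1$) by finitely many matrices over $\Z G$; hence each is \emph{Lipschitz} for the respective $\ell_1$-norms with a uniform constant. A Lipschitz chain map sends cycles to cycles and fillings to fillings with controlled distortion of both $\|\cdot\|_1$ and $\|\cdot\|_\partial$, so it yields an inequality $\FV_M^{d+1} \preceq \FV_{M,\,\text{other resolution}}^{d+1}$; applying this in both directions gives $\sim$. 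The one point requiring a little care is that a chain map need not be injective on cycles, so to get the reverse comparison one uses the composite chain map (resolution $\to$ other resolution $\to$ back) which is $\Z G$-homotopic to the identity; absorbing the homotopy—again a finite collection of $\Z G$-matrices, hence Lipschitz—into the estimate gives the bound on the original $\FV_M^{d+1}$ in terms of the other one. This is a routine but slightly bookkeeping-heavy diagram chase, and I would present it as a lemma on Lipschitz chain maps followed by the two-line deduction.
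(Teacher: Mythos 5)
Your overall plan is the right one and matches the paper's: pass to a free resolution $F_{d+1}\xrightarrow{\varphi_{d+1}} F_d \xrightarrow{\varphi_d}\cdots$, view $F_d$ and $F_{d-1}$ as permutation modules $\Z[S]$ and $\Z[T]$ with free finite-orbit $G$-actions, and then prove a combinatorial finiteness statement about the kernel of a morphism of permutation modules (the paper's Proposition~\ref{MT:1}) that algebraizes the ``decompose cycles into connected pieces'' argument. Part (2) of your proposal is also exactly how the paper handles the invariance statement: chain homotopy equivalence of projective resolutions plus the observation that $\Z G$-module maps between finitely generated free modules are Lipschitz for the chosen $\ell_1$-norms; the paper delegates this to~\cite[Theorem~3.5]{HaMa14}, remarking the argument transfers verbatim from $M=\Z$.

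However, your sketch for part (1) has a concrete error and leaves the genuine technical content unaddressed. First, you define the grouping relation via ``supports of $\varphi_{d+1}(\mu_{g_j e_{t_j}})$ overlap,'' where $\mu_{g_je_{t_j}}$ is a preimage of $g_je_{t_j}$ under $\varphi_{d+1}$. Basis elements of $F_d$ are not in $\ker\varphi_d=\operatorname{im}\varphi_{d+1}$, so such preimages need not exist; and if they did, $\varphi_{d+1}(\mu_{g_je_{t_j}})=g_je_{t_j}$ would reduce your relation to triviality. The correct notion -- and the one the paper uses (its ``$\rho$-intersection'' in Section~3) -- is overlap of supports, with opposite signs, of the images $\varphi_d(g_je_{t_j})$ in $\Z[T]$, i.e.\ the \emph{lower} boundary map, mirroring the geometric fact that cycle cancellation happens at shared faces. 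Second, two nontrivial facts are asserted but not established in your sketch: (a) that grouping a kernel element $z$ by the generated equivalence relation produces pieces $z_i$ that are themselves in $\ker\varphi_d$ -- this is the paper's Lemma~\ref{LEM:4} and Proposition~\ref{PROP:ML}, proved via a maximality argument on ``$\rho$-connected parts''; and (b) that there are only finitely many $G$-orbits of $\rho$-connected elements of bounded $\ell_1$-norm -- this is Proposition~\ref{PROP:LFISF}, which is an induction resting on a local-finiteness lemma (Lemma~\ref{lem:counting}) that is exactly where finite $G$-stabilizers of $T$ are needed (automatic here since the action on $T$ is free). Your appeal to ``translate near the identity; finitely many possibilities by cocompactness'' is the right heuristic, but it is not a proof: bounded $\ell_1$-mass together with finitely many orbits of basis elements does not alone bound the number of orbits of length-$n$ sums, and the $\rho$-connectedness hypothesis is precisely what is used to control the translates. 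So: same route, but the heart of the matter is deferred rather than carried out, and the sign/direction of the intersection relation needs to be corrected.
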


This result provides a new collection of invariants for $\Z G$--modules that remains to be studied. The invariant is interesting even in the case that $M=\Z$ and $G$ is suitable.  In this case, the filling functions $\FV_\Z^{d+1}$ correspond to the filling invariants of the group $G$, usually denoted by $\FV_G^{d+1}$,  in the context of Theorem~\ref{thm:main} and Remark~\ref{rem:gpinvariance}. There are computations by Young~\cite{Yo16} in the case that $G$ corresponds to a discrete Heinserberg group  answering a conjecture of Gromov~\cite[Chapter 5]{Gr93}, estimations in the case that $G$ is the special linear group $SL (n,\Z)$ by Epstein and Thurston~\cite[Chapter 10]{ECHLT92},  and general results in the case that $G$ is a hyperbolic group by Gersten and Mineyev~\cite{Ge96, Mi00} among others. In~\cite[Remark 3.4]{HaMa14}, it was observed that there was no proof in the literature that if the $G$ is of type $FP_3$ (i.e. $\Z$ is of type $FP_3$ as a module over $\Z G$) then $\FV_G^3$ is finite valued; observe that this is a consequence of Theorem~\ref{thm:main2}.

This note contains a proof of the first statement of Theorem~\ref{thm:main2}. The proof of the second statement appears in~\cite[Theorem 3.5]{HaMa14} for the case that $M=\Z$, but the argument works verbatim for the general case.

\subsection*{Organization}
The rest of the paper is organized as follows: Section 2 contains  some preliminary definitions including the definition of $\FV_M^{d+1}$, the statement of the  main technical result of the article, Proposition~\ref{MT:1}, and arguments implying Theorems~\ref{thm:main} and~\ref{thm:main2}.  Section 3 is devoted to the proof of Proposition~\ref{MT:1}. The last section discusses some geometric examples illustrating some matters about Theorem~\ref{thm:main}. 

\subsection*{Acknowledgments}
The first author was funded by an Science Undergraduate Research Award (SURA) of Memorial University during a part of this project. The second author is funded by the Natural Sciences and Engineering Research Council of Canada (NSERC). We thank the referee for comments on the article, and Lana Mart\'inez-Aoki for assistance during this work.

\section{Main technical result and proof of the main theorems} \label{sec:tech}

Let $G$ be a group and let $S$ be a $G$-set. 
The set of all orbits of $S$ under the $G$-action is denoted by $S/G$.
The free abelian group $\Z[S]$ with $S$ as a free generating set can be made into $\Z G$-module that we shall call the permutation module on $S$. The $\Z$-basis $S$ induces a $G$-equivariant norm, called the \emph{$\ell_1$--norm},  given by $\left \| \   \sum_{s\in S} n_s s  \ \right \|_S =  \sum_{s \in S} | n_s |$, where $n_s \in \mathbb Z$.   

If the $G$-action on $S$ is free, then $\Z [S]$ is a free module over $\Z S$.  Conversely, if $F$ is a free $\Z G$--module with a chosen $\Z G-$basis $\{ \alpha_i | i\in I \}$, then $F$ is isomorphic to the permutation module $\Z [S]$ where $S=\left \{ g \alpha_i  \colon g \in G , i\in I \right \}$ with the natural $G$-action. In this case the $\Z G-$basis $\{ \alpha_i | i\in I \}$ of $F$ induces an $\ell_1$-norm as before.

\begin{definition}[Gersten's filling norms.] Let  $\eta\colon F \to M$ be a surjective  morphism of $\Z G$-modules where $F$ is finitely generated and free with a chosen finite $\Z G$--basis, the induced \emph{filling norm} on $M$ is defined by
\begin{equation}\nonumber
\|m\|_\eta = \min\{\|x\|_F : x\in F, \eta(x)=m \}.
\end{equation}
where $\|\cdot\|_F$ denotes the induced $\ell_1$-norm on $F$.
\end{definition}

\begin{remark}[Induced $\ell_1$-norms are filling norms]
Let $\Z[S]$ be a permutation $\Z G$-module such that $G$ acts freely on $S$ and the quotient $S/G$ is finite. Then  $\Z[S]$ is a finitely generated free $\Z G$--module and the $\ell_1$-norm $\|\cdot\|_S$ is a filling norm. This statement holds without the assumption that $G$ acts freely on $S$. Since we do not  use this fact, we leave its verification to the reader.
\end{remark}

\begin{definition}\label{def:fillm}
Let $\rho \colon \Z [S] \to \Z [T]$ be a morphism of permutation $\Z G$--modules such that the kernel $K = \ker\rho$ is finitely generated.  Let $\|\cdot\|_K$ denote a filling norm on $K$ and let $\|\cdot\|_S$ denote the $\ell_1$-norm on $\Z[S]$ induced by $S$. Define the function $\FV_{\rho} \colon \N \to \N\cup \{\infty\}$ as 
\begin{equation}\nonumber
\FV_{\rho} (n) = \sup\{ \| x \|_K | x\in K , \|x\|_S \leq n \}
\end{equation}
\end{definition}

\begin{proposition} \label{MT:1}
Let $\rho\colon \Z[S] \rightarrow \Z[T]$ be a morphism. Suppose that $S/G$ and $T/G$ are finite,  $T$ has finite $G$-stabilizers for all $t\in T$, and $\ker \rho$ is finitely generated. Then $\FV_\rho (n) < \infty$ for all $n\in \N$.
\end{proposition}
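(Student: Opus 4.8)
The plan is to reduce the finiteness of $\FV_\rho(n)$ to a statement about finitely many $G$-orbits of ``small'' elements of $K = \ker\rho$, exactly as in the topological sketch but carried out algebraically. Fix $n\in\N$. The set $B_n = \{x\in\Z[S] : \|x\|_S \le n\}$ is $G$-invariant, but it is infinite and has infinitely many $G$-orbits in general (because $S/G$ finite does not bound how ``spread out'' a norm-$n$ element can be), so the first step is to cut every $x\in K\cap B_n$ into pieces supported on a single $G$-orbit of $S$. Writing $x = \sum_{\mathcal{O}\in S/G} x_{\mathcal{O}}$ where $x_{\mathcal O}$ is the restriction of $x$ to the orbit $\mathcal O$, we get $\sum_{\mathcal O}\|x_{\mathcal O}\|_S = \|x\|_S \le n$; however the $x_{\mathcal O}$ need not lie in $K$, so this is not yet a decomposition inside $K$. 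The right move is instead to decompose $x$ according to the connected components of its support viewed through $\rho$.

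\textbf{Key steps.} First I would make precise the notion of a ``connected'' element. For $x\in\Z[S]$ with support $\mathrm{supp}(x)\subseteq S$, build a graph on $\mathrm{supp}(x)$ by joining $s, s'$ whenever some $t\in T$ appears in both $\rho(s)$ and $\rho(s')$ (i.e. the supports of $\rho(s)$ and $\rho(s')$ in $T$ meet); call $x$ \emph{connected} if this graph is connected. Then every $x\in\Z[S]$ splits as $x = \sum_{i=1}^k x_i$ with each $x_i$ connected, with pairwise disjoint supports, and $k \le \|x\|_S$, $\sum_i \|x_i\|_S = \|x\|_S$. The crucial observation is that if $x\in K = \ker\rho$, then each $x_i \in K$ as well: since the $T$-supports of $\rho(x_i)$ are pairwise disjoint (two cells of $S$ in different components never share a $T$-cell), $0 = \rho(x) = \sum_i \rho(x_i)$ forces each $\rho(x_i) = 0$. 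This is where the combinatorial structure of permutation modules is used essentially. Second, I would bound the number of $G$-orbits of connected elements of $K$ of norm $\le n$. Using that $T/G$ is finite and each $G$-stabilizer $G_t$ is finite, one checks that a connected $x$ of norm $\le n$ has $|\rho(\mathrm{supp}_T(x))|$ bounded and its $S$-support contained in a bounded union of $G$-translates; combined with $S/G$ finite (so $\rho$ is ``locally finite'': each $t\in T$ lies in $\rho(s)$ for only finitely many $s$ up to nothing — actually for only finitely many $s$ total, since $\rho(s)$ has finite norm and, fixing representatives, only finitely many orbits), this shows the set of connected norm-$\le n$ elements of $\Z[S]$ meets only finitely many $G$-orbits. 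Hence the same holds for those lying in $K$. Third, since the filling norm $\|\cdot\|_K$ is $G$-equivariant, it is bounded on each $G$-orbit, so there is $M = M(n)$ with $\|x_i\|_K \le M$ for every connected $x_i\in K$ with $\|x_i\|_S \le n$. Finally, for arbitrary $x\in K$ with $\|x\|_S\le n$, decompose $x = \sum_{i=1}^k x_i$ as above and estimate
\[
\|x\|_K \le \sum_{i=1}^k \|x_i\|_K \le kM \le \|x\|_S\, M \le nM,
\]
so $\FV_\rho(n)\le nM < \infty$.

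\textbf{Main obstacle.} The genuinely delicate point is the second step: establishing that connected elements of bounded norm fall into finitely many $G$-orbits. This requires showing that $\rho$ is locally finite in the appropriate sense — that each generator $t\in T$ is ``hit'' by only finitely many $s\in S$ with nonzero coefficient — which follows from $S/G$ finite together with each $\rho(s)$ having finite $\ell_1$-norm (finitely many $t$ per $s$, finitely many $s$ per orbit), and then using the finiteness of the stabilizers $G_t$ to control how a connected configuration can be pinned down by its image. The rest is bookkeeping: verifying that the component decomposition respects $K$ (immediate from disjointness of $T$-supports) and that $G$-equivariance of a filling norm gives boundedness on orbits (immediate, since $\|gx\|_K = \|x\|_K$ as $g$ permutes a $\Z G$-basis of the free module covering $K$). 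I would also remark that no freeness of the $G$-action on $S$ or $T$ is needed, only finiteness of $T$-stabilizers, which is exactly what makes the orbit-counting in step two go through.
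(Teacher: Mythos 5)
Your overall strategy matches the paper's (decompose a kernel element into ``connected'' pieces that again lie in the kernel, show such pieces of bounded $\ell_1$-norm fall into finitely many $G$-orbits, bound the filling norm on orbits by $G$-invariance, and sum), but your decomposition step is genuinely different and, I think, cleaner. You partition $\mathrm{supp}_S(z)$ into the connected components of the graph joining $s\sim s'$ whenever $\rho(s)$ and $\rho(s')$ have overlapping $T$-support, and observe that the $T$-supports of the images of distinct components are disjoint, so $\rho(z)=0$ forces each component-restriction of $z$ to lie in $\ker\rho$ separately. The paper instead uses a sign-sensitive notion of $\rho$-intersection (requiring $\langle\rho(x_1),t\rangle\langle\rho(y_1),t\rangle<0$ for some $t$) and obtains a connected kernel part $x\preceq_S z$ via a maximality argument (Lemma~\ref{LEM:4}): one takes a $\preceq_S$-maximal $\rho$-connected $x\preceq_S z$ and derives a sign-chasing contradiction if $x\notin\ker\rho$. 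Your canonical component decomposition avoids this greedy lemma entirely. The trade-off is that the paper's class $\mathcal{D}_n$ is built one signed generator at a time, which makes the orbit count in Proposition~\ref{PROP:LFISF} a clean one-step induction; with your coarser components you need a slightly different bookkeeping (e.g.\ a BFS or spanning-tree argument rooted at one orbit representative) to see that connected elements of norm $\le n$ meet only finitely many $G$-orbits. That can be made to work, but your sketch of it is vague (``bounded union of $G$-translates'').

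There is one concrete error that needs fixing. You assert that local finiteness of $\rho$ --- that $S(t)=\{s\in S:\langle\rho(s),t\rangle\ne 0\}$ is finite for each $t\in T$ --- ``follows from $S/G$ finite together with each $\rho(s)$ having finite $\ell_1$-norm,'' and you reserve the finiteness of the stabilizers $G_t$ for a later, separate purpose. That implication is false. Take $G=\Z$ acting on $S=\Z$ by translation and on $T=\{t\}$ trivially, with $\rho(s)=t$ for every $s$. Then $S/G$ and $T/G$ are both singletons and $\|\rho(s)\|_T=1$ for all $s$, yet $S(t)=S$ is infinite. The correct statement is the paper's Lemma~\ref{lem:counting}, whose proof uses all three hypotheses: for fixed $t$ and orbit representative $s_i$, the condition $\langle\rho(gs_i),t\rangle\ne 0$ is equivalent to $\langle\rho(s_i),g^{-1}t\rangle\ne 0$, so $g^{-1}t$ ranges over the finite $T$-support of $\rho(s_i)$, and for each such target value the set of admissible $g$ is a coset of the finite stabilizer $G_t$. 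So $G_t$-finiteness is precisely what makes $S(t)$ finite; it is not a separate ``pinning down'' step. Without this corrected, the orbit-count --- and hence the existence of the uniform bound $M(n)$ on $\|\cdot\|_K$ over connected pieces --- would not go through. Once you fold $G_t$-finiteness into the local-finiteness lemma, the rest of your argument is sound.
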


In the the remaining of this section, we deduce Theorems~\ref{thm:main} and~\ref{thm:main2} from Proposition~\ref{MT:1}. 

\begin{proof}[Proof of Theorem \ref{thm:main}]
Let $G$ be a group acting properly and compactly by cellular automorphisms on a $d$-connected complex $X$.  The free abelian groups $C_d (X)$ and $C_{d+1} (X)$ are permutation $\Z G$--modules over the $G$--sets of $d$--cells and $(d+1)$--cells of $X$, respectively.  Observe that the definitions of   $FV_X^{d+1}$ coincides with definition~\ref{def:fillm} of $FV_{\partial_{d}}$ for the boundary map $C_{d}(X) \xrightarrow{\partial_{d}} C_{d-1}(X)$. The proof concludes by verifying the hypothesis of 
Proposition~\ref{MT:1} for this morphism. 

Since the $G$--action on $X$ is cocompact, there are finitely many $G$-orbits of $d$--cells and $(d+1)$--cells; in particular $C_{d+1}(X)$ is a finitely generated $\Z G$--module.  Since $X$ is $d$--acyclic,  the sequence $C_{d+1}(X) \xrightarrow{\partial_{d+1}}  C_{d}(X) \xrightarrow{\partial_{d+1}} C_{d-1}(X)$ is exact and hence $\kernel (\partial_d )$ is a finitely generated $\Z G$--module. Since the $G$--action is proper, the stabilizer of each $d$-cell of $X$ is finite. This concludes the proof.
\end{proof}

\begin{definition} \label{def:filling-module}
Let $M$ be a $\Z G$-module of type $FP_{d+1}$. The \emph{$(d+1)$--filling function of $M$} is the   function 
\begin{equation}\nonumber
\FV_{M}^{d+1} \colon \N \to \N\cup\{\infty\}
\end{equation}
defined as follows. Let 
\begin{equation}\nonumber
P_{d+1}\xrightarrow{\varphi_{d+1}} P_{d} \xrightarrow{\varphi_d} \ldots \xrightarrow{\varphi_2} P_1 \xrightarrow{\varphi_1} P_0 \xrightarrow{} M \xrightarrow{}  0
\end{equation}
be a $FP_{d+1}$-resolution for $M$. Chose filling norms on $P_{d+1}$ and $P_d$ denoted by $\|\cdot \|_{P_{d+1}}$ and $\|\cdot\|_{P_{d}}$ respectively. Then 
\begin{equation}\nonumber
\FV_{M}^{d+1} (k) = \sup\{\|x\|_{\varphi_{d+1}} : x\in\ker\varphi_{d} \text{, } \|x\|_{P_{d}} \leq k\} 
\end{equation}
where
\begin{equation}\nonumber
\|x\|_{\varphi_{d+1}} = \min\{ \| y\|_{P_{d+1}} : y\in P_{d+1} , \varphi_{d+1} (y) = x \}
\end{equation}
\end{definition}

The proof of theorem \ref{thm:main2} uses the following lemma. 

\begin{lemma}[\cite{MR1324339},chapter 8, 4.3]\label{lem:freer}
A $\Z G$-module $M$ is of type $FP_d$  if and only if $M$ admits a partial resolution of free finitely generated $\Z G$-modules of the form
\begin{equation}\nonumber
F_{d+1}\xrightarrow{} F_{d} \xrightarrow{} \ldots \xrightarrow{} F_1 \xrightarrow{} F_0 \xrightarrow{} M \xrightarrow{} 0.
\end{equation}

\end{lemma}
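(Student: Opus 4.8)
The plan is to prove the two implications separately, the content lying entirely in the forward direction. The implication ``admitting a finitely generated free partial resolution $\Rightarrow$ type $FP_d$'' is immediate, since a finitely generated free $\Z G$-module is in particular finitely generated and projective, so such a resolution is already a partial projective resolution of the required shape. For the converse I would induct on $d$, producing a finitely generated \emph{free} partial resolution $F_d\to F_{d-1}\to\cdots\to F_0\to M\to 0$ (for $d=0$ this is simply a surjection $F_0\to M$ from a finitely generated free module). The base case is the statement that a finitely generated module is a quotient of a finitely generated free module, obtained by sending a free basis onto a finite generating set.

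For the inductive step, suppose $M$ has type $FP_d$ with $d\ge 1$. Choose a finitely generated free $F_0$ with a surjection $\pi\colon F_0\to M$ and set $K=\ker\pi$. The key claim is that $K$ has type $FP_{d-1}$. Granting this, the inductive hypothesis supplies a finitely generated free partial resolution $G_{d-1}\to\cdots\to G_0\to K\to 0$, and composing $G_0\twoheadrightarrow K$ with the inclusion $K\hookrightarrow F_0$ splices it onto $0\to K\to F_0\xrightarrow{\pi} M\to 0$, producing $G_{d-1}\to\cdots\to G_0\to F_0\to M\to 0$; this is the desired resolution of $M$, being exact at $F_0$ because the image of $G_0$ equals $K=\ker\pi$ and exact elsewhere by construction.

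It remains to justify the key claim, which is the standard dimension-shifting principle. Since $M$ has type $FP_d$ there is a partial resolution $P_d\to\cdots\to P_0\xrightarrow{\varepsilon}M\to 0$ by finitely generated projectives, and truncating at $K':=\ker\varepsilon$ exhibits $K'$ as a module of type $FP_{d-1}$. Schanuel's lemma, applied to the two finitely generated projective presentations $F_0\xrightarrow{\pi}M$ and $P_0\xrightarrow{\varepsilon}M$, gives an isomorphism $K\oplus P_0\cong K'\oplus F_0$. The right-hand side has type $FP_{d-1}$, being the direct sum of an $FP_{d-1}$-module and a finitely generated free (hence $FP_\infty$) module; hence so does $K\oplus P_0$, and since type $FP_{d-1}$ is inherited by direct summands, $K$ has type $FP_{d-1}$. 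Here I would cite Schanuel's lemma, the truncation observation, and closure of $FP_n$ under direct summands from the standard references, e.g.\ \cite{MR1324339}; each is a short diagram chase or a parallel induction.

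I expect the only real obstacle to be assembling this standard $FP_n$ machinery correctly rather than anything in the construction, which is routine once Schanuel's lemma and its usual corollaries are available. For a self-contained account, Schanuel's lemma is the single ingredient worth writing out in full, after which the summand-closure and dimension-shifting statements follow by an entirely analogous induction.
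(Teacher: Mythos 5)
The paper offers no proof of this lemma: it is quoted from Brown's \emph{Cohomology of Groups} (Proposition 4.3 of Chapter VIII), and your argument --- dimension shifting via Schanuel's lemma, together with closure of type $FP_n$ under finite direct sums and under direct summands --- is exactly the standard proof of that result. So there is nothing in the paper to compare against, and the mathematics you propose is sound, granted the three standard facts you explicitly defer to the reference (Schanuel's lemma, the truncation observation, summand closure), which is a reasonable level of detail for a lemma the paper itself only cites.

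One point you should make explicit rather than leave implicit: what you actually prove is the statement with top term $F_d$, namely ``$M$ is of type $FP_d$ if and only if it admits a partial resolution $F_d\to\cdots\to F_0\to M\to 0$ by finitely generated free modules.'' That is Brown's statement, and it is the version the paper actually uses: in the proof of Theorem~\ref{thm:main2} the lemma is applied to a module of type $FP_{d+1}$ to produce a free partial resolution terminating in $F_{d+1}$. The lemma as printed, with $F_{d+1}$ on top for a module of type $FP_d$, is false as literally stated: an exact sequence $F_{d+1}\to F_d\to\cdots\to F_0\to M\to 0$ of finitely generated free modules is itself a partial projective resolution of length $d+1$, so its existence forces $M$ to be of type $FP_{d+1}$, and there exist modules of type $FP_d$ but not $FP_{d+1}$ (for instance $\Z$ over $\Z G$ for the Stallings--Bieri groups). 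You have therefore silently corrected an off-by-one error in the statement; say so explicitly, and then your induction on $d$, with base case ``$FP_0$ equals finitely generated'' and inductive step handled by Schanuel applied to the two presentations of $M$, goes through as written.
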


\begin{proof}[Proof of Theorem \ref{thm:main2}]
Since $M$ is type $FP_{d+1}$, by Lemma~\ref{lem:freer},  there exists a partial resolution of free and finitely generated $\Z G$-modules 
\begin{equation}\nonumber
F_{d+1}\xrightarrow{\phi_{d+1}} F_{d} \xrightarrow{\phi_d} \ldots \xrightarrow{\phi_2} F_1 \xrightarrow{\phi_1} F_0 \xrightarrow{} M \xrightarrow{} 0.
\end{equation}
such that $\ker\phi_{n}$ is finitely generated for $n$ such that $d\geq n\geq 0$. Consider the finitely generated free modules $F_{d}$ and $F_{d-1}$ as permutation modules $\Z [S]$ and $\Z [T]$ respectively. Finite generation and freeness implies that we can assume that $G$ acts freely and with finitely many orbits on both $S$ and $T$. Since  the induced $\ell_1$-norms on $\Z [S]$ and $\Z [T]$ are in particular filling norms, 
the definition  of   $\FV_M^{d+1}$ coincides with definition~\ref{def:fillm} of $\FV_{\phi_{d+1}}$. 
Then the first statement of the theorem on finiteness of $\FV_M^{d+1}$ follows by applying Proposition~\ref{MT:1} to $\FV_{\phi_{d+1}}$.

The proof of the second statement that the growth rate of $\FV_M^{d+1}$ is independent of the choice of partial resolution and filling norms appears in~\cite[Theorem 3.5]{HaMa14} for the case that $M=\Z$ and $G$ is a group of type $FP_{d+1}$. The argument for arbitrary $M$ follows verbatim by replacing each ocurrence of $\Z$ by $M$. Let us remark that the heart of the argument is the fact that any two projective resolutions of $M$ are chain homotopy equivalent~\cite[pg.24, Theorem 7.]{MR1324339}. 
\end{proof}

\section{Finiteness}
This section contains the proof Proposition~\ref{MT:1}.
Let $S$ and $T$ be $G$-sets. For $x\in \Z[S]$ with $x=\sum_{s\in S} n_s s$, we denote by $\langle x,s\rangle $ the integer $n_s$. For $x \in \Z[T]$ and $t\in T$ we defined analogously $\langle x, t\rangle$.

\begin{definition}[$x$ is a part of $y$]
 Let $x,y \in \Z[S]$.  We say \emph{$x$ is a part of $y$}, denoted by $x\preceq_S y$, 
to mean that for each $s\in S$ if $\langle x,s\rangle >0$ then $\langle x, s\rangle \leq \langle y,s\rangle$, and if $\langle x,s\rangle <0$ then $\langle y,s\rangle \leq \langle x,s\rangle$. Note that this is equivalent to $\langle x,s\rangle\langle y,s\rangle \geq \langle x,s\rangle^{2}$ for all $s\in S$. 
\end{definition}

\begin{definition}[$S$-intersect]
For $x,y \in \Z[S]$, the $S$-intersection of $x$ and $y$ is defined as $x\cap_S y =\{s\in S : \langle x,s\rangle \langle y,s\rangle <0\}$.
\end{definition}

\begin{remark}
\label{Remark}
Let $x,y\in \Z[S]$. Then $\|x+y\|_S = \|x\|_S + \|y\|_S$ if and only if $x\cap_s y = \emptyset$. Indeed, 
\begin{equation}\nonumber
\|x+y\|_S  = \sum_{s\in S}|\langle x,s\rangle +\langle y,s\rangle | \leq \sum_{s\in S} |\langle x,s\rangle | + \sum_{s\in S}|\langle y, s \rangle | = \|x\|_S + \|y\|_S
\end{equation} 
with equality if and only if $\langle x,s\rangle$ and $\langle y,s \rangle$ have  the same sign for all $s\in S$.
\end{remark}

Throughout the rest of this section, let 
\[ \mc{D}_1 = S\cup \{-s | s\in S\}.\]
Furthermore, let $\rho\colon \Z[S] \to \Z[T]$ denote a morphism of permutation modules.

\begin{definition}[$\rho$-intersect]
A pair of elements $x,y\in \Z[S]$ have \emph{non-trivial  $\rho$-intersection}, denoted by  $x\cap_\rho y \neq \emptyset$, if there exists  $ x_1 , y_1\in\mc{D}_1$ such that   $\rho(x_1) \cap_T \rho(y_1) \neq \emptyset$ where $x_1 \preceq_S x$ and  $y_1\preceq_S y$.
\end{definition}

\begin{definition}[$\rho$-connected]
For each integer $n\geq 1$, let $\mc{D}_n$ be the collection of elements of $\Z[S]$ of the form $x=\sum_{i=1}^n x_i$ where each $x_i\in \mc{D}_1$ and for every $k<n$ the elements $\sum_{i=1}^k x_i$  and $x_{k+1}$ have trivial $S$-intersection and non-trivial $\rho$-intersection.
An element $x\in \Z[S]$ is   $\rho$\emph{-connected} if $x\in \mc{D}_n$ for some $n\geq 1$.
\end{definition}

\begin{remark}\label{rem:triviality}
For $x \in \Z[S]$,  $x\in \mc{D}_n$  if and only if $x$ is $\rho$-connected and $\|x\|\leq n$.
\end{remark}

\begin{lemma}
\label{LEM:4}
If $0\neq z\in\ker\rho$, then there exists $x$ such that 
\begin{enumerate}
\item $x \preceq_S  z$, in particular, $\| z-x \|_S < \|z\|_S$,
\item$x\in\ker\rho$, and 
\item $x$ is $\rho$-connected
\end{enumerate}
\begin{proof}
Let $0\neq z\in \ker\rho$ be an arbitrary element. Consider the set $\Omega=\{x\preceq_S z | x\neq 0\text{, }x\text{ is }\rho\text{-connected}\}$; this is a non-empty finite set partially ordered by $\preceq_S$. Let $x\in\Omega$ be a maximal element. 
We claim that $x\in\ker\rho$.
Suppose that $x\notin \ker\rho$. We have $\rho(x)$ and $\rho(z-x)$ are non-zero and satisfy
\begin{equation}\nonumber
 \rho(x) + \rho(z-x) = 0
\end{equation}
Since $\rho(x) \neq 0$ there exists $t\in T$ such that $\langle \rho(x), t \rangle \neq 0$. Therefore
\begin{equation}\nonumber
\langle \rho(z-x), t \rangle = -\langle \rho(x), t\rangle
\end{equation}
Since $\rho (z-x) \neq 0$, there exists $s\in S$ for which
\begin{equation}\nonumber
\langle z-x, s\rangle\langle \rho(s), t\rangle\langle \rho(z-x), t\rangle > 0
\end{equation}
This implies that
\begin{equation}\nonumber
\langle z-x,s  \rangle\langle \rho(s), t \rangle\langle \rho(x), t\rangle < 0
\end{equation}
Now define $\lambda = \frac{\langle z-x, s\rangle}{ |\langle z-x, s\rangle |}$. We show $x +\lambda s$ is $\rho$-connected. First observe that $x\cap_S \lambda s = \emptyset$ since $x\preceq_S z$ and $\lambda s \preceq_S z$. Moreover, note that $x\cap_\rho \lambda s \neq \emptyset$ since
\begin{equation}\nonumber
\langle \rho (x), t \rangle\langle \rho(\lambda s), t \rangle= \langle  \rho(x), t\rangle \langle  \rho (s), t\rangle \lambda < 0
\end{equation} 
Therefore $x + \lambda s$ is $\rho$-connected and $x \precneqq_S x + \lambda s \preceq_S z$. This contradicts the maximality of $x$ and therefore $x\in \ker\rho$.
\end{proof}
\end{lemma}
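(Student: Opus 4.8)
The plan is to take $x$ to be a $\preceq_S$-maximal nonzero $\rho$-connected part of $z$, and to show that maximality alone forces $x\in\ker\rho$; conditions (1) and (3) are then built into the construction.

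First I would consider the set $\Omega=\{x\in\Z[S]: 0\neq x\preceq_S z \text{ and } x\text{ is }\rho\text{-connected}\}$. This set is finite, since a part of $z$ is specified by one integer in a bounded interval for each $s$ in the finite support of $z$; and it is non-empty, since for any $s$ in the support of $z$ the single signed basis element $\lambda s$, with $\lambda=\langle z,s\rangle/|\langle z,s\rangle|$, lies in $\mc{D}_1$ and is therefore $\rho$-connected, while $\lambda s\preceq_S z$. Let $x$ be a $\preceq_S$-maximal element of $\Omega$. Then (1) and (3) hold by definition, and the parenthetical inequality follows from $\|z\|_S=\|x\|_S+\|z-x\|_S$ (Remark~\ref{Remark}, valid since $x\preceq_S z$ forces $x\cap_S(z-x)=\emptyset$) together with $\|x\|_S\geq 1$.

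The real content is establishing (2). Suppose $x\notin\ker\rho$, so $\rho(x)\neq 0$ and hence $\rho(z-x)=-\rho(x)\neq 0$ since $z\in\ker\rho$. I would fix $t\in T$ with $\langle\rho(x),t\rangle\neq 0$; then $\langle\rho(z-x),t\rangle=-\langle\rho(x),t\rangle$ has the opposite sign. Expanding $\rho(z-x)=\sum_s\langle z-x,s\rangle\rho(s)$ and comparing signs at $t$, some summand must carry the sign of $\langle\rho(z-x),t\rangle$, which produces $s\in S$ with
\[\langle z-x,s\rangle\,\langle\rho(s),t\rangle\,\langle\rho(x),t\rangle<0.\]
Setting $\lambda=\langle z-x,s\rangle/|\langle z-x,s\rangle|$, I would then show $x+\lambda s\in\Omega$ strictly above $x$, contradicting maximality. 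The part relation $x\precneqq_S x+\lambda s\preceq_S z$ and the triviality $x\cap_S\lambda s=\emptyset$ follow from a short sign check at coordinate $s$, using that $\langle z-x,s\rangle$ and $\langle z,s\rangle$ have the same sign because $x\preceq_S z$. For the non-trivial $\rho$-intersection $x\cap_\rho\lambda s\neq\emptyset$, which in the definition is phrased only in terms of signed basis parts, I would descend: expanding $\rho(x)=\sum_{s'}\langle x,s'\rangle\rho(s')$, some summand carries the sign of $\langle\rho(x),t\rangle\neq 0$, so $x_1:=(\mathrm{sign}\,\langle x,s'\rangle)\,s'\in\mc{D}_1$ satisfies $x_1\preceq_S x$ and $\langle\rho(x_1),t\rangle$ has the sign of $\langle\rho(x),t\rangle$; combined with the displayed inequality this gives $\langle\rho(x_1),t\rangle\langle\rho(\lambda s),t\rangle<0$, i.e. $t\in\rho(x_1)\cap_T\rho(\lambda s)$. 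Since $x\cap_S\lambda s=\emptyset$ and $x\cap_\rho\lambda s\neq\emptyset$, if $x\in\mc{D}_n$ then $x+\lambda s\in\mc{D}_{n+1}$, so $x+\lambda s$ is $\rho$-connected and lies in $\Omega$; this is the contradiction, whence $x\in\ker\rho$.

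The main obstacle is this last $\rho$-intersection step: the sign information one naturally obtains lives at the level of $\rho(x)$ and $\rho(\lambda s)$ evaluated at a single $t\in T$, whereas the definition of $\rho$-intersect only inspects $\rho$-images of signed basis elements below $x$ and below $\lambda s$, so one must first extract a suitable signed basis part $x_1\preceq_S x$ before invoking the definition. The remaining verifications are elementary sign bookkeeping, but must be handled carefully because $\preceq_S$ and $\cap_S$ are sensitive to the signs of coefficients.
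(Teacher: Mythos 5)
Your proposal is correct and follows essentially the same argument as the paper: pick a $\preceq_S$-maximal nonzero $\rho$-connected part $x$ of $z$ and derive a contradiction from $x\notin\ker\rho$ by producing $t$, $s$, and $\lambda$ exactly as you do. Your extra step of descending to a signed basis element $x_1\preceq_S x$ before invoking the definition of $\rho$-intersection is a detail the paper's proof glosses over (it checks the sign condition at the level of $\rho(x)$ rather than of a part in $\mc{D}_1$), and your version handles it correctly.
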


\begin{proposition}
\label{PROP:ML}
For all  nonzero $z\in \ker\rho$  there exists $\rho$-connected elements $x_1.\ldots ,x_n \in\ker\rho$ such that 
\begin{enumerate}
\item $z=x_1 +\dots +x_n$
\item $x_i \preceq_S z$ for each $i$
%\item $\|z\|_S = \|x_1\|_S + \ldots + \|x_n\|_S$. 
\end{enumerate}
\begin{proof}
Applying Lemma \ref{LEM:4} to $z\in\ker\rho$, there exists a $\rho$-connected element $x_1 \in \ker\rho$ such that $x_1 \preceq_S z$. If $z-x_1 \neq 0$ then there exists a $\rho$-connected element $x_2 \in\ker\rho$ such that $x_2 \preceq_S z-x_1 \prec_S z$. If $z-x_1-x_2 \neq 0$ then there exists a
$\rho$-connected element $x_3 \in\ker\rho$ such that $x_3 \preceq_S z-x_1-x_2 \prec z-x_1 \prec z$. This process must terminate for some positive integer $n$ since \[ \|z-x_1-\ldots -x_k\|>\|z-x_1-\ldots -x_k-x_{k+1}\|\geq 0\] if $z-x_1-\ldots -x_k \neq 0$. 
Hence we obtain $\rho$-connected elements $x_1,\ldots ,x_n \in\ker\rho$ such that  $x_i \preceq_S z$ for each $i$, and $z=x_1 +\dots +x_n$. % and $\|z\|_S = \|x_1\|_S + \ldots + \|x_n\|_S$. 
\end{proof}
\end{proposition}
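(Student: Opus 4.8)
The plan is to iterate Lemma~\ref{LEM:4}, peeling off one $\rho$-connected summand lying in $\ker\rho$ at a time while controlling the $\ell_1$-norm of what remains, so that the procedure terminates after finitely many steps.

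Concretely, I would first apply Lemma~\ref{LEM:4} to $z$ to obtain a $\rho$-connected element $x_1\in\ker\rho$ with $x_1\preceq_S z$ and $\|z-x_1\|_S<\|z\|_S$. Since $\ker\rho$ is a subgroup of $\Z[S]$, the remainder $r_1=z-x_1$ again lies in $\ker\rho$. If $r_1=0$ the process stops; otherwise apply Lemma~\ref{LEM:4} to $r_1$ to get $x_2$, set $r_2=r_1-x_2$, and continue, always applying Lemma~\ref{LEM:4} to the current remainder $r_k$ while $r_k\neq 0$.

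Two routine points then need checking. For \emph{termination}: Lemma~\ref{LEM:4}(1) applied to $r_{k-1}$ gives $\|r_k\|_S<\|r_{k-1}\|_S$ whenever $r_{k-1}\neq 0$, so $\|z\|_S>\|r_1\|_S>\|r_2\|_S>\cdots$ is a strictly decreasing sequence of nonnegative integers and hence $r_n=0$ for some $n$, giving $z=x_1+\dots+x_n$. For the requirement $x_i\preceq_S z$: I would first record the elementary fact that $x\preceq_S w$ implies $w-x\preceq_S w$, checked coordinatewise (a nonzero $\langle x,s\rangle$ has the sign of $\langle w,s\rangle$ with $|\langle x,s\rangle|\le|\langle w,s\rangle|$, so $\langle w-x,s\rangle$ keeps the sign of $\langle w,s\rangle$ with no larger absolute value). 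Combined with transitivity of $\preceq_S$, a short induction yields $r_k\preceq_S r_{k-1}\preceq_S\dots\preceq_S z$ for all $k$; since Lemma~\ref{LEM:4} provides $x_{k+1}\preceq_S r_k$, transitivity gives $x_{k+1}\preceq_S z$. Each $x_i$ lies in $\ker\rho$ and is $\rho$-connected by construction, which completes the argument.

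I do not anticipate a real obstacle: all the substance sits in Lemma~\ref{LEM:4}, which is already available, and the remainder is the monovariant (strictly decreasing $\ell_1$-norm) termination argument plus the bookkeeping that ``being a part of'' is transitive and is preserved under passing to complements. The one subtlety is to feed Lemma~\ref{LEM:4} the successive remainders $r_k$, and not $z$ itself, so that the norm genuinely drops at every stage.
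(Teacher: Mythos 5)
Your proposal is correct and follows essentially the same route as the paper: iterate Lemma~\ref{LEM:4} on the successive remainders, use the strict decrease of the $\ell_1$-norm for termination, and chain $x_{k+1}\preceq_S r_k\preceq_S\dots\preceq_S z$ via transitivity. Your explicit verification that $x\preceq_S w$ implies $w-x\preceq_S w$ is a small detail the paper leaves implicit.
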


\begin{remark} \label{rem:trivial2}
For $x,y\in\Z[S]$ the relations $x\preceq_S y$, $x\cap_S y \neq \emptyset$, and $x\cap_\rho y\neq\emptyset$ are preserved by the $G$-action on $\Z[S]$. Thus, if $x\in\mc{D}_n$ and $g\in G$ then $gx\in\mc{D}_n$. It follows that $\mc{D}_n$ is a $G$-set.
\end{remark}

\begin{proposition}
\label{PROP:LFISF}
Suppose that $S$ and $T$ have finitely many $G$-orbits and each element  of $T$ has finite $G$-stabilizer. Then for every $n\geq 1$ the set $\mc{D}_n$ has finitely many $G$-orbits.
\end{proposition}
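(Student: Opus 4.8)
The plan is to argue by induction on $n$. The base case $n=1$ is immediate, since $\mc{D}_1 = S\cup\{-s : s\in S\}$ is covered by finitely many $G$-orbits (at most twice as many as $S$). For the inductive step I would first record the elementary observation that every $x\in\mc{D}_{n+1}$ decomposes as $x = y+s'$ with $y=\sum_{i=1}^n x_i\in\mc{D}_n$ and $s'=x_{n+1}\in\mc{D}_1$ satisfying $y\cap_S s' = \emptyset$ and $y\cap_\rho s'\neq\emptyset$ (the defining conditions of $\mc{D}_{n+1}$ for indices $k<n$ are exactly those for $y\in\mc{D}_n$, and the condition for $k=n$ is the displayed pair of conditions); conversely any such sum lies in $\mc{D}_{n+1}$. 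Since the relations $\preceq_S$, $\cap_S$ and $\cap_\rho$ are $G$-invariant (Remark~\ref{rem:trivial2}), if $y_1,\dots,y_m$ are orbit representatives for $\mc{D}_n$ — finite in number by the inductive hypothesis — then $\mc{D}_{n+1}$ is covered by the $G$-translates of the finitely many sets $\{y_j+s' : s'\in E_{y_j}\}$, where for $y\in\mc{D}_n$ I set $E_y = \{s'\in\mc{D}_1 : y\cap_S s'=\emptyset,\ y\cap_\rho s'\neq\emptyset\}$. Thus the whole proof reduces to showing that $E_y$ is a \emph{finite} set for each fixed $y$.

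The crucial input — and the only place where the hypotheses that $S/G$ is finite and that every $t\in T$ has finite $G$-stabilizer are used — is the claim that for each $t\in T$ the set $R_t = \{s\in S : \langle\rho(s),t\rangle\neq 0\}$ is finite. I would prove this by contradiction: if $R_t$ were infinite then, since $S$ has finitely many orbits, some orbit $Gs_0$ would meet $R_t$ in an infinite set, so there would be infinitely many $g\in G$ with $\langle\rho(s_0),g^{-1}t\rangle = \langle g\,\rho(s_0),t\rangle = \langle\rho(gs_0),t\rangle\neq 0$; as $\rho(s_0)$ has finite support in $T$, infinitely many such $g$ would send a single $t'\in T$ to $t$, and $\{g\in G : gt'=t\}$ is a coset of $\mathrm{Stab}_G(t')$, contradicting finiteness of that stabilizer. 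I expect this to be the main obstacle, in the sense that it is the single step that genuinely exploits the standing hypotheses rather than merely unwinding definitions.

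Granting the claim, the finiteness of $E_y$ should follow mechanically. The set $P(y) = \{a\in\mc{D}_1 : a\preceq_S y\}$ is finite, of size $|\operatorname{supp}(y)|$. For any $s'\in\mc{D}_1$, note that $s'$ is the only nonzero element of $\mc{D}_1$ that is a part of $s'$, so $y\cap_\rho s'\neq\emptyset$ forces the existence of $a\in P(y)$ and $t\in T$ with $\langle\rho(a),t\rangle\langle\rho(s'),t\rangle<0$; such a $t$ lies in the finite set $\operatorname{supp}(\rho(a))$, and its existence puts $s'\in\{\pm s : s\in R_t\}$. Hence $E_y \subseteq \bigcup_{a\in P(y)}\bigcup_{t\in\operatorname{supp}(\rho(a))}\{\pm s : s\in R_t\}$, a finite union of finite sets by the claim, so $E_y$ is finite. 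Substituting this into the reduction of the first paragraph completes the induction, and hence the proof.
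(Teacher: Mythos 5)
Your proof is correct and follows essentially the same route as the paper's: the same induction on $n$, the same reduction of the inductive step to the finiteness of $\{s'\in\mc{D}_1 : y\cap_S s'=\emptyset,\ y\cap_\rho s'\neq\emptyset\}$ for a fixed orbit representative $y\in\mc{D}_n$, and the same crucial counting lemma that $\{s\in S : \langle\rho(s),t\rangle\neq 0\}$ is finite (which the paper proves directly as Lemma~\ref{lem:counting} by decomposing over orbit representatives of $S$, while you prove it by contradiction and pigeonhole — an equivalent argument). The only cosmetic difference is that the paper's Lemma~\ref{LEM:Prev2} unwinds $y=\sum x_i$ via its $\mc{D}_n$-decomposition, whereas you work directly with $P(y)=\{a\in\mc{D}_1 : a\preceq_S y\}$; both amount to the same finite set and the same bound.
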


Before the proof of the Proposition \ref{PROP:LFISF}, we introduce the following lemmas. 

\begin{lemma}\label{lem:counting}
\label{LEM:Prev}
Suppose  $S$ has finitely many $G$-orbits,  and each element of $T$ has finite $G$-stabilizer. 
Then for every $t\in T$, the set $S(t) = \{s\in S| \langle  \rho(s), t \rangle \neq 0 \}$ is finite.
\end{lemma}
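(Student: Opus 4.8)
The plan is to prove Lemma~\ref{lem:counting} first, since it is the combinatorial engine behind Proposition~\ref{PROP:LFISF}. Fix $t\in T$. The key observation is that $\rho$, being a $\Z G$-module morphism between permutation modules, is $G$-equivariant: for $s\in S$ and $g\in G$ we have $\langle \rho(gs), gt\rangle = \langle \rho(s), t\rangle$. I would let $s_1,\dots,s_r$ be orbit representatives for $S/G$, so that every $s\in S$ is of the form $g s_j$ for some $g\in G$ and some $j$. For a fixed $j$, the element $\rho(s_j)\in\Z[T]$ is a finite $\Z$-linear combination of finitely many elements of $T$, say $t_1,\dots,t_m$ (the support of $\rho(s_j)$). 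Now $\langle \rho(g s_j), t\rangle = \langle \rho(s_j), g^{-1}t\rangle \neq 0$ forces $g^{-1}t$ to lie in the finite support $\{t_1,\dots,t_m\}$ of $\rho(s_j)$; equivalently $g \in \bigcup_{i=1}^m \{h\in G : h t_i = t\}$. Each such set is either empty or a coset of $\mathrm{Stab}_G(t_i)$... but more to the point, $\{h : ht_i = t\}$ is a left coset of $\mathrm{Stab}_G(t)$ when nonempty, hence finite since stabilizers in $T$ are finite. So the set of $g\in G$ with $\langle \rho(gs_j),t\rangle\neq 0$ is finite, and since there are finitely many $j$, the set $S(t)$ is finite. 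The main subtlety here is just keeping the bookkeeping straight between stabilizers of the $t_i$ and of $t$; using that $h t_i = t = h' t_i$ implies $h^{-1}h'\in\mathrm{Stab}(t_i)$ is the clean way to see finiteness.

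With Lemma~\ref{lem:counting} in hand, I would turn to Proposition~\ref{PROP:LFISF}, proceeding by induction on $n$. The base case $n=1$ is immediate: $\mc{D}_1 = S\cup\{-s : s\in S\}$ has finitely many $G$-orbits because $S$ does. For the inductive step, suppose $\mc{D}_n/G$ is finite, with orbit representatives $w_1,\dots,w_p$. An element of $\mc{D}_{n+1}$ has the form $w + x_{n+1}$ where $w=\sum_{i=1}^n x_i \in \mc{D}_n$, $x_{n+1}\in\mc{D}_1$, $w\cap_S x_{n+1}=\emptyset$, and $w\cap_\rho x_{n+1}\neq\emptyset$. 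By Remark~\ref{rem:trivial2} the $G$-action permutes such decompositions, so up to the $G$-action I may assume $w = w_k$ is one of the finitely many representatives. It then suffices to show that for each fixed $w_k$, only finitely many $x_{n+1}\in\mc{D}_1$ can occur. The constraint $w_k\cap_\rho x_{n+1}\neq\emptyset$ means there exist parts $a\preceq_S w_k$, $b\preceq_S x_{n+1}$ with $a,b\in\mc{D}_1$ and $\rho(a)\cap_T\rho(b)\neq\emptyset$; since $b\in\mc{D}_1$ we have $b=\pm s$ for some $s\in S$, and $\rho(a)\cap_T\rho(b)\neq\emptyset$ forces the existence of $t\in T$ with $\langle\rho(a),t\rangle\langle\rho(s),t\rangle<0$, in particular $\langle\rho(s),t\rangle\neq 0$, so $s\in S(t)$. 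But $a$ is a part of the fixed element $w_k$, so $a$ ranges over a finite set (the finitely many parts in $\mc{D}_1$ of $w_k$), and hence $\rho(a)$ has support in a finite subset $T_k\subseteq T$; thus $t\in T_k$. Then $s\in\bigcup_{t\in T_k} S(t)$, which is finite by Lemma~\ref{lem:counting}. So $x_{n+1}=\pm s$ ranges over a finite set, $w_k + x_{n+1}$ ranges over a finite set, and therefore $\mc{D}_{n+1}$ has finitely many $G$-orbits.

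The main obstacle I anticipate is being careful about the logical structure of the $\rho$-intersection condition inside the inductive step: the parts $a, b$ witnessing $w_k\cap_\rho x_{n+1}\neq\emptyset$ are not canonically determined, so one must argue over all possible witnesses, but the saving grace is that $a$ is constrained to be a part of the \emph{fixed} element $w_k$ (finitely many choices) and $b$ is a single signed basis element. Once that is pinned down, the finiteness of $\bigcup_{t\in T_k}S(t)$ does all the work, and the argument closes cleanly. A secondary point to handle with care is that the reduction "assume $w = w_k$" uses that if $g(w + x_{n+1})$ is rewritten with $gw = w_k$ then $g x_{n+1}$ still lies in $\mc{D}_1$ and still has trivial $S$-intersection and nontrivial $\rho$-intersection with $w_k$ — this is exactly the content of Remark~\ref{rem:trivial2}, so it is legitimate, but worth stating explicitly. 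Finally I would note how Proposition~\ref{PROP:LFISF} feeds into Proposition~\ref{MT:1}: given $n$, every $z\in\ker\rho$ with $\|z\|_S\le n$ decomposes via Proposition~\ref{PROP:ML} into $\rho$-connected pieces lying in $\ker\rho$, each of norm $\le n$, hence each in some $\mc{D}_j$ with $j\le n$; finiteness of $\bigcup_{j\le n}\mc{D}_j/G$ together with $G$-invariance of the filling norm bounds $\|z\|_K$ uniformly, giving $\FV_\rho(n)<\infty$.
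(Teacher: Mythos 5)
Your proof of Lemma~\ref{lem:counting} is correct and is essentially the paper's argument: pass to finitely many orbit representatives $s_j$ of $S$, use $G$-equivariance to rewrite $\langle\rho(gs_j),t\rangle\neq 0$ as $g^{-1}t$ lying in the finite support of $\rho(s_j)$, and conclude from finiteness of $G$-stabilizers of elements of $T$. The one small slip is a coset-side mix-up: $\{h\in G : h t_i = t\}$ is a \emph{right} coset of $\mathrm{Stab}_G(t)$, equivalently a \emph{left} coset of $\mathrm{Stab}_G(t_i)$ (as your parenthetical $h^{-1}h'\in\mathrm{Stab}_G(t_i)$ correctly shows), not a left coset of $\mathrm{Stab}_G(t)$ --- but since every stabilizer in $T$ is assumed finite, this has no effect on the conclusion.
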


\begin{proof}
For any $t\in T$, $s\in S$, and $g\in G$ we have $\langle \rho (gs), gt \rangle = \langle  \rho (s), t \rangle$. For each $s\in S$, let $T(s) = \{ t\in T | \langle \rho (s), t \rangle \neq 0\}$. As $\rho$ is a morphism $T(s)$ is a finite set for all $s\in S$. Now, fix $t\in T$ and let $s_1,\ldots, s_m$ be representatives of $G$-orbits of $S$. Then
\begin{equation}\nonumber
\begin{split}
S(t) &= \bigcup_{i=1}^{m} \{gs_i | g\in G , \langle \rho(s_i), g^{-1}t \rangle \neq 0 \} \\ 
&= \bigcup_{i=1}^{m} \bigcup_{r \in T(s_i)} \{ gs_i | g\in G , g^{-1}t = r\} 
\end{split}
\end{equation}
Observe that the set $\{g\in G | g^{-1}t =r\}$ is in one-to-one correspondence with $G_t = \{g\in G | gt=t\}$. By assumption $G_t$ is finite and thus for each $i\in\{1,\ldots , m \}$ and $r\in T(s_i)$ the set $ \{ gs_i | g\in G , g^{-1}t = r\}$ is finite. Therefore, the set $S(t)$ is finite.
\end{proof}

\begin{lemma}
\label{LEM:Prev2}
Suppose $S$ has finitely many $G$-orbits and that $T$ has finite $G$-stabilizers for each $t\in T$. Then for all $n\in \Z_+$ and for all $y\in \mathcal{D}_n$ the set $\{x\in \mc{D}_1 : x\cap_\rho y \neq \emptyset\}$ is finite.
\end{lemma}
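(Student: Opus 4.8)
The plan is to bound the size of the set $\{x\in\mc{D}_1 : x\cap_\rho y \neq \emptyset\}$ by reducing everything to the finiteness statement already established in Lemma~\ref{LEM:Prev}. First I would unwind the definition of non-trivial $\rho$-intersection: for $x\in\mc{D}_1$ with $x\cap_\rho y\neq\emptyset$, there exist $x_1\in\mc{D}_1$ with $x_1\preceq_S x$ and $y_1\in\mc{D}_1$ with $y_1\preceq_S y$ such that $\rho(x_1)\cap_T\rho(y_1)\neq\emptyset$. Since $x\in\mc{D}_1$ means $x=\pm s$ for a single $s\in S$, the only parts $x_1\in\mc{D}_1$ of $x$ are $x$ itself; so the condition simplifies to: there is some $y_1\in\mc{D}_1$ with $y_1\preceq_S y$ and some $t\in T$ with $\langle\rho(x),t\rangle\langle\rho(y_1),t\rangle<0$. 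In particular $\langle\rho(x),t\rangle\neq 0$, i.e. writing $x=\pm s$, we get $s\in S(t)$ in the notation of Lemma~\ref{LEM:Prev}.

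Next I would observe that the relevant $y_1$'s range over a \emph{finite} set: since $y\in\mc{D}_n$, we have $\|y\|_S\le n$ (Remark~\ref{rem:triviality}), so the set $P(y)=\{y_1\in\mc{D}_1 : y_1\preceq_S y\}$ has at most $\|y\|_S\le n$ elements — indeed $y_1\preceq_S y$ with $y_1=\pm s'$ forces $\langle y_1,s'\rangle$ to share the sign of $\langle y,s'\rangle$ and to have absolute value $1$, so $P(y)\subseteq\{\,\mathrm{sgn}(\langle y,s'\rangle)\,s' : \langle y,s'\rangle\neq 0\,\}$. For each such $y_1=\pm s'$, the set of $t\in T$ with $\langle\rho(y_1),t\rangle\neq 0$ is exactly $T(s')$, which is finite because $\rho$ is a morphism (this is the observation used at the start of the proof of Lemma~\ref{LEM:Prev}). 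Hence the set of $t\in T$ that can possibly witness a $\rho$-intersection of some $x\in\mc{D}_1$ with $y$ is contained in the finite union $\bigcup_{y_1\in P(y)} \{t : \langle\rho(y_1),t\rangle\neq 0\}$, call it $T_0$, which is finite.

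Finally I would assemble the bound: if $x\in\mc{D}_1$ and $x\cap_\rho y\neq\emptyset$, then (writing $x=\pm s$) there is $t\in T_0$ with $s\in S(t)$, so $x\in\bigcup_{t\in T_0}\{\,s,-s : s\in S(t)\,\}$. By Lemma~\ref{LEM:Prev} each $S(t)$ is finite (its hypotheses — $S$ has finitely many $G$-orbits and every $t\in T$ has finite $G$-stabilizer — are exactly those of the present lemma), and $T_0$ is finite, so this union is finite. Therefore $\{x\in\mc{D}_1 : x\cap_\rho y\neq\emptyset\}$ is finite, as claimed.

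I do not anticipate a serious obstacle here; the lemma is essentially bookkeeping on top of Lemma~\ref{LEM:Prev}. The one point requiring minor care is making the reduction ``$x_1\preceq_S x$ forces $x_1=x$ for $x\in\mc{D}_1$'' fully rigorous and, symmetrically, correctly enumerating $P(y)$ — i.e. being careful that membership in $\mc{D}_1$ means a signed single generator, not an arbitrary norm-one element — but this is routine. The only genuine input is the finiteness of $S(t)$, which is Lemma~\ref{LEM:Prev}, and of $T(s')$, which is immediate from $\rho$ being a $\Z G$-module morphism (each generator has finite support in its image).
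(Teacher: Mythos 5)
Your proof is correct and takes essentially the same route as the paper's: reduce to the finitely many $\mc{D}_1$-parts of $y$, observe that the witnessing $t\in T$ range over a finite set because each $\rho(y_1)$ has finite support, and then invoke Lemma~\ref{lem:counting} (your Lemma~\ref{LEM:Prev}) to bound the corresponding $x\in\mc{D}_1$ by finiteness of $S(t)$. The paper organizes this as $\mc{D}_1(y)=\bigcup_i\mc{D}_1(x_i)$ using the explicit decomposition $y=\sum x_i$ from the definition of $\mc{D}_n$ and then treats a single $\mc{D}_1(s)$, but the ingredients and the bookkeeping are the same.
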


\begin{proof}
For $y\in \Z[S]$ denote by $\mc{D}_1(y)$ the set $\{x\in \mc{D}_1 : x\cap_\rho y \neq \emptyset\}$.
Let $y\in\mc{D}_{n}$. By definition,  $y = \sum_{i=1}^{n} x_i$ where each $x_i \in\mc{D}_1$   and for each $k<n$ the elements $\sum_{i=1}^{k} x_i$ and $x_{k+1}$ have trivial $S$-intersection and non-trivial $\rho$-intersection. It follows from the definition of $\rho$-intersect that 
\begin{equation}\nonumber
\mc{D}_1(y) =\{ x\in\mc{D}_1 : x\cap_\rho y \neq \emptyset\} = \bigcup_{i=1}^{n} \{ x\in \mc{D}_1 : x\cap_\rho x_{i} \neq \emptyset\} = \bigcup_{i=1}^{n} \mc{D}_1(x_i).
\end{equation}
Therefore, to conclude it is enough to show that $\mc{D}_1(s)$ is finite for every $s\in \mathcal{D}_1$.

Let $s\in \mc{D}_1$. Observe that
\[ \mc{D}_1(s) =   \bigcup_{t \in T } \{x\in\mc{D}_1 : \langle\rho (x) , t \rangle \langle \rho(s), t \rangle  < 0\} \subset \bigcup_{t \in T} \{x\in\mc{D}_1 :  \langle\rho (x) , t \rangle \langle \rho(s), t \rangle \neq 0 \}.\]
It is immediate $\{t\in T \colon \langle \rho(s), t \rangle \neq 0\}$ is finite. Hence the union on the right is over a collection with finitely many non-empty sets.  By Lemma~\ref{lem:counting}, for any $t\in T$ the set $\{x \in \mc{D}_1 \colon  \langle \rho(x) , t \rangle \neq 0\}$ is finite, and hence  $\{x\in\mc{D}_1 :  \langle\rho (x) , t \rangle \langle \rho(s), t \rangle \neq 0 \}$ is finite.  Therefore the expression on the right is the union of a finite collection of finite sets, and we conclude that $\mc{D}_1(s)$ is finite. 
\end{proof}

\begin{proof}[Proof of Proposition~\ref{PROP:LFISF}]
 We prove by induction on n.\\
For $n=1$ the result follows from the assumption that $S$ has finitely many $G$-orbits and the definition of $\mc{D}_1$. \\
Suppose $\mc{D}_n$ has finitely many $G$-orbits with representatives $y_1,\dots,y_\ell$. For each $1\leq k\leq \ell$, let $A_k$ be the collection of elements $A_k$ of $\mc{D}_1$ such that \begin{equation}\nonumber
y_k \cap_S z = \emptyset \text{ and }y_k\cap_\rho z \neq \emptyset.
\end{equation}
By Lemma \ref{LEM:Prev2}, the collection $A_k$ is finite. The proof concludes with the verification of the following claim. \\
\emph{Claim}: The set
\begin{equation} \nonumber
\{y_k + z : 1\leq k \leq \ell \text{ and } z\in A_k \} 
\end{equation}
is a collection of representatives of $G$-orbits of $\mc{D}_{n+1}$.\\
Let $x\in \mc{D}_{n+1}$. Then $x=\sum_{i=1}^{n+1} x_i$ where each $x_i\in \mc{D}_1$ and for every $k<n$ the elements $\sum_{i=1}^k x_i$  and $x_{k+1}$ have trivial $S$-intersection and non-trivial $\rho$-intersection. By definition $\sum_{i=1}^{n} x_i$ is in $\mc{D}_n$. Hence $ \sum_{i=1}^{n} x_i = gy_j$ for some $g\in G$ and some $1\leq j\leq \ell$. It follows that 
 $x= gy_j + x_{n+1}$ and therefore $g^{-1}x = y_j +g^{-1} x_{n+1}$.  By Remark~\ref{rem:trivial2}, we have that $z=g^{-1} x_{n+1}$ is an element of $A_j$. Therefore $x = gy_i + gz = g(y_i +z)$. This proves the claim. 
\end{proof} 

\begin{proof}[Proof of Proposition~\ref{MT:1}]
Let $K$ denote $\ker\rho$, and let $\|\cdot\|_K$ denote a chosen filling norm on $K$.  
By Proposition~\ref{PROP:LFISF}, for each positive integer $n$, the $G$-set $\cup_{i=1}^{n} \mathcal{D}_i$ has finitely many $G$-orbits. Therefore, for each $n \in \Z_+$ there is an integer $B_n$ such that for every $x\in \cup_{i=1}^{n}\mathcal{D}_i$, $\|x\|_K \leq B_n$.\\
Let $0\neq z\in K$ such that $\|z\|_S \leq n$. By Proposition \ref{PROP:ML}, 
there exists $\rho$-connected elements $x_1, \ldots , x_m \in K$ such that $m\leq n$, and 
$z=x_1 +\dots +x_m$, and each $x_i \prec z$. By Remark~\ref{rem:triviality}, each $x_i \in \mc{D}_n$. Therefore, by the triangle inequality, 
\[ \|z\|_K \leq \sum_{i=1}^{m} \|x_i\|_K \leq m\cdot B_n \leq n \cdot B_n.\]
This shows that  $\FV_\rho (n)\leq n\cdot B_n < \infty$.
\end{proof}

\begin{remark}
Observe that Proposition~\ref{MT:1} can be generalized as follows. Consider the sequence of modules $\ker\rho\rightarrow \Z[S] \xrightarrow{\rho}  \Z[T]$ where $|S/G|, |T/G| < \infty$ and $T$ has finite $G$-stabilizers for all $t\in T$. Let $\|\cdot\|_K$ be a $G$-invariant norm on $K$ then for all $n\in\N$
\begin{equation}\nonumber
\sup\{\|x\|_K | x\in K \text{  } \|x\|_S \leq n\} < \infty
\end{equation}
In particular, $K$ being finitely generated induces a filling norm which is $G$-invariant.
\end{remark}

\section{Examples}

\begin{figure}
\includegraphics{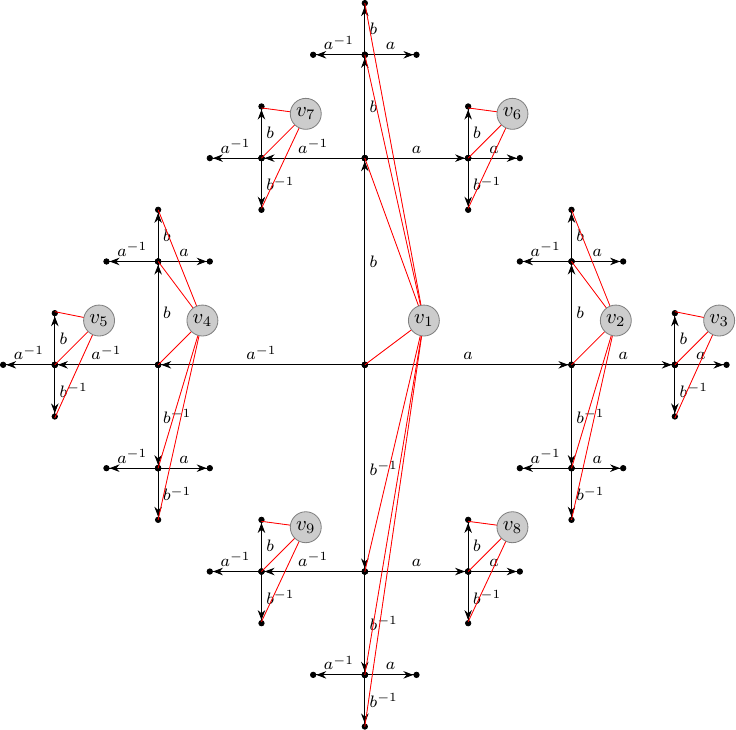}
\caption{Schematic the Coned-off Cayley graph $\hat \Gamma (G, P, S)$ where $G$ is the free group in two letters $S=\{a,b\}$ and $P$ is the cyclic subgroup $\langle b \rangle$} 
\end{figure}

A graph $\Gamma $ is called \emph{fine} if for every edge $e$ and each integer $n > 0$, the number of circuits of length at most $n$ which contain $e$ is finite.  By a circuit we mean a closed edge path that does not pass through a vertex more than once. The length of a circuit is defined as the number of edges.

\begin{theorem}\cite[Theorem 1.3]{Ma15}
\label{thm:3}
Let $X$ be a cocompact $G$-cell complex with finite stabilizers of 1-cells. The following two statements are equivalent:
\begin{enumerate}
\item $X$ has fine 1-skeleton and the homology group $H_1 (X, Z)$ is trivial,
\item $\FV_X (k) <\infty$ for any integer k.
\end{enumerate}
\end{theorem}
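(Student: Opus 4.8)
The plan is to prove the two implications of the equivalence separately; in both directions the workhorse is the elementary observation that every integral $1$--cycle in a graph is a sum of oriented circuits whose lengths add up to its $\ell_1$--norm. Write $\Gamma=X^{(1)}$ for the $1$--skeleton, and recall that here $\FV_X$ is the $2$--dimensional homological filling function $\FV^2_X$, which fills $1$--cycles by $2$--chains. Cocompactness of $X$ gives finitely many $G$--orbits of cells in each dimension; in particular finitely many orbits of edges and of $2$--cells.

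For $(1)\Rightarrow(2)$, I would run the topological sketch of Theorem~\ref{thm:main} almost verbatim, with ``circuit'' in place of ``connected $d$--cycle'' and with \emph{fineness} replacing \emph{properness}. Given $\gamma\in Z_1(X,\Z)$ with $\|\gamma\|_1\le k$, write $\gamma=\sum_{i=1}^{m}c_i$ as a sum of oriented circuits with $\sum_i\mathrm{len}(c_i)=\|\gamma\|_1$, so $m\le k$ and each $c_i$ has length $\le k$. Fineness together with finiteness of the edge--orbit set forces finitely many $G$--orbits of circuits of length $\le k$: any such circuit contains an edge that some $g\in G$ carries to one of finitely many orbit representatives, and fineness bounds the number of circuits of length $\le k$ through each representative. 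Since $H_1(X,\Z)=0$, every circuit lies in $\mathrm{im}\,\partial_2$ and hence bounds; choosing fillings on orbit representatives and translating them by $G$, and using $G$--invariance of $\|\cdot\|_1$, yields a constant $M_k$ with $\|c\|_\partial\le M_k$ for every circuit of length $\le k$. Then $\|\gamma\|_\partial\le\sum_i\|c_i\|_\partial\le m\,M_k\le k\,M_k$, so $\FV_X(k)\le k\,M_k<\infty$.

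For $(2)\Rightarrow(1)$, $H_1(X,\Z)=0$ is immediate: a nonzero class is represented by a $1$--cycle $\gamma$ of finite norm which is not a boundary, so $\|\gamma\|_\partial=\infty$ and $\FV_X(\|\gamma\|_1)=\infty$. To prove that $\Gamma$ is fine, fix an edge $e$ and $n\in\N$; I will show only finitely many circuits of length $\le n$ contain $e$. Such a circuit $c$ is a $1$--cycle with $\|c\|_1\le n$, so by hypothesis it bounds a $2$--chain $\mu$ with $\|\mu\|_1\le\FV_X(n)$. The first key point is a \emph{connectedness reduction}: decomposing $\mu$ into components under the relation ``two $2$--cells share an edge in their boundaries,'' the boundaries of distinct components have disjoint edge--supports, and since any $1$--cycle supported on edges of the circuit $c$ is an integer multiple of $c$, exactly one component has boundary $c$ while the others bound $0$; discarding the latter, I may take $\mu$ connected. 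The second key point, which is where finiteness of edge stabilizers is used, is that each edge lies in the boundary of only finitely many $2$--cells: within one $G$--orbit with representative $f_0$, the cells $gf_0$ containing $e$ correspond to those $g$ for which $g^{-1}e$ is one of the finitely many edges of $\partial f_0$, and for each such edge $a$ the set of $g$ with $ga=e$ is empty or a coset of the finite group $\mathrm{Stab}_G(a)$; summing over finitely many orbits gives the claim. Hence the adjacency graph on the $2$--cells of $X$ is locally finite, so only finitely many connected $2$--chains of norm $\le\FV_X(n)$ have a given $2$--cell in their support; and a connected filling of a circuit through $e$ necessarily has in its support one of the finitely many $2$--cells with $e$ in its boundary. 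Therefore there are finitely many possibilities for $\mu$, hence for $c=\partial_2\mu$, and $\Gamma$ is fine.

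I expect the main obstacle to be the direction $(2)\Rightarrow(1)$, specifically its two non-routine ingredients: the connectedness reduction, which rests on the rigidity fact that the only $1$--cycles supported on a circuit are its integer multiples, and the lemma that finite edge stabilizers together with cocompactness force each edge into only finitely many $2$--cells. Together these ``anchor'' any bounded-norm filling of a short circuit through $e$ to one of finitely many connected clusters of $2$--cells near $e$, and this is the mechanism that turns the quantitative hypothesis ``$\FV_X$ is finite-valued'' into the combinatorial conclusion ``$\Gamma$ is fine.'' Everything else — the circuit decomposition of $1$--cycles, the orbit counting in the presence of fineness, and the bookkeeping with filling norms — should be routine.
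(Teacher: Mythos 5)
The paper does not prove this statement: Theorem~\ref{thm:3} is quoted from~\cite{Ma15} as a black box, and no argument for it appears anywhere in the present article, so there is no internal proof to compare your proposal against. I can therefore only assess your proposal on its own terms, and on that footing it looks essentially sound.

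The implication $(1)\Rightarrow(2)$ is the routine half and your argument is the expected one: the conforming decomposition of an integral $1$--cycle into oriented circuits with additive lengths, orbit--counting of short circuits via fineness plus finitely many edge orbits, $G$--invariance of the $\ell_1$--norm to propagate a bound from orbit representatives, and the triangle inequality. Note that this direction uses cocompactness and fineness but not the finite edge--stabilizer hypothesis, which is consistent with where you deploy it. The implication $(2)\Rightarrow(1)$ is where the content lies, and you correctly isolate the two ingredients that make it work: (a) the connectedness reduction, which hinges on the disjointness of edge--supports of boundaries of distinct adjacency components together with the rigidity fact that the cycle space of a simple circuit is $\Z\cdot c$; and (b) the lemma that cocompactness plus finite $1$--cell stabilizers makes each edge lie in only finitely many $2$--cells, hence the adjacency graph on $2$--cells is locally finite. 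A point worth stating explicitly in a write--up (you use it implicitly): since the $\partial\mu_i$ have pairwise disjoint supports and sum to $c$, each $\partial\mu_i$ is automatically supported on edges of $c$, which is what licenses the ``integer multiple of $c$'' step. With that spelled out, the chain of reasoning --- anchored filling, bounded connected support, finitely many candidate $2$--chains, hence finitely many circuits $c=\partial\mu$ through $e$ of length at most $n$ --- is complete. I don't see a genuine gap; whether your route coincides with the one in~\cite{Ma15} I cannot say from this paper alone.
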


This result allows to exhibit the examples that contrast with Theorem~\ref{thm:main} as follows: 
\begin{itemize}
\item There is a group $G$ acting cocompactly, not propertly, and by cellular automorphisms on a simply-connected complex $X$ for which $\FV_X^{2}(m)$ is finite for all $m\in\N$. 

In particular, the converse of Theorem~\ref{thm:main} does not hold.

\item There is a group $G$ acting cocompactly by cellular automorphisms on a simply-connected complex $X$ for which $FV_X^{2}(m)$ is infinite for some $m\in\N$.  In particular, the properness assumption in Theorem~\ref{thm:main} can not be removed.
\end{itemize}

The two examples are based on the notion of coned-off Cayley complex.
We use the version from~\cite{GrMa08} which we briefly recall below; for another version see~\cite[Section 3]{Ma17}. 

\begin{figure}
\includegraphics{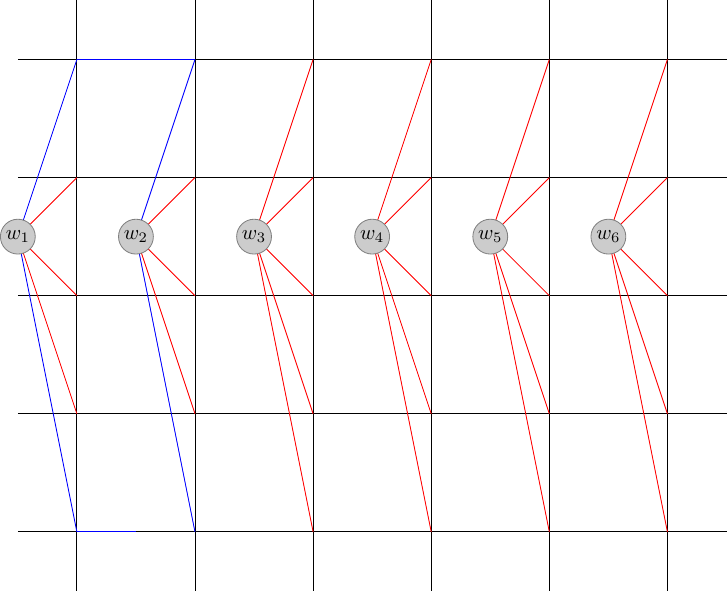}
\caption{Schematic the Coned-off Cayley graph $\hat \Gamma (G, P, S)$ where $G$ is the free abelian group in two letters $S=\{a,b\}$ and $P$ is the cyclic subgroup $\langle b \rangle$.}
\label{fig:torus} \end{figure}

Let $G$ be a group generated and let $P$ be a subgroup. The group $G$ is \emph{finitely generated relative to $P$} if there is a finite subset $S\subset G$ such that the natural map $F(S)\ast P \to G$ is surjective, where $F(S)$ denotes the free group on $S$, and $F(S)\ast P$ denotes the free product of $F(S)$ and $P$.  In this case $S$ is called a \emph{finite relative generating set of $G$ with respect to $P$}. 

Suppose that $S$ is a finite relative generating set of $G$ with respect to $P$. Without loss of generality assume that $S$ is closed under inverses.  The \emph{coned-off Cayley graph $\hat \Gamma = \hat \Gamma (G, P, S)$} is the graph with vertex set consisting of all elements of $G$ and all left cosets of $P$; the edge set is the collection of pairs $(g,gs)\in G\times G$ for $g\in G$ and $s\in S$, and pairs $(g, gP)$ for $g\in G$. Observe that the left action of $G$ on itself extends to a left action on $\hat \Gamma$. Vertices of $\hat \Gamma$ of the form $gP$ are called \emph{cone--vertices}.  Observe that the $G$-stabilizers of cone--vertices correspond to conjugates of $P$, in particular, if $P$ is infinite the action is not proper.  Moreover, the $G$--stabilizers of $1$-cells of $\hat \Gamma$ are trivial.  It is well known that the assumption that  $S$ is a relative generating implies that $\hat \Gamma$ is path--connected as a combinatorial complex, in fact, this is an equivalence as remarked in~\cite{HK08}. 

Under the assumptions, the group $G$ is \emph{finitely presented relative to $P$} if there is a finite subset $R\subset F(S) \ast P$ such that the kernel of the map $F(S)\ast P \to G$ is the smallest normal subgroup containing $R$. In this case, we say that
\begin{equation}\label{eq:fp} \langle S, P | R \rangle \end{equation}
is a finite relative presentation of $G$ with respect to $P$.  It is an exercise to show that if $G$ is finitely presented and $P$ is finitely generated, then $G$ is finitely presented relative to $P$. We refer the reader to~\cite{Os06} for an exposition on finite relative presentations.

Assume that $P$ is finitely generated, that~\eqref{eq:fp} is a finite relative presentation of $G$ with respect to $P$, and that $S\cap P$ is a generating set of $P$.  The \emph{coned-off Cayley complex $\hat C=\hat C(G, P, S, R)$} is the $2$-dimensional complex with $1$-skeleton the coned-off Cayley graph $\hat \Gamma (G,P,S)$  obtained by equivariantly attaching 2-cells  as follows. For  each word $r\in R$ correspond a loop in $\hat \Gamma$. Attach a $2$-cell with trivial stabilizer to each such loop, and extend in a manner equivariant under the $G$-action on $\hat \Gamma$. Similarly, for each $P\in \mc P$, for each generator in $s\in S\cap P$ and each $g\in G$ corresponds a loop in $\hat \Gamma$ of length three passing through the vertices $g, gs, gP$. Attach a $2$-cell with trivial stabilizer to each such loop, equivariantly under the $G$-action. The resulting $G$-complex $\hat C$ is simply-connected~\cite[Lemma 2.48]{GrMa08}, the $G$-action is cocompact by construction, and if $P$ is infinite the $G$-action is not proper.  Now we consider the $2$--dimensional filling function $FV^2_{\hat C}$ of $\hat C$.

\begin{example}\label{ex2}
Let $G$ be the free group of rank 2, let $S=\{a,b\}$ be a free generating set, and let $P$ be the cyclic subgroup generated by $b$. It is an observation that the coned-off Cayley graph $\hat \Gamma(G, P, S)$ is a fine graph and hence Theorem~\ref{thm:3} implies that $\FV^2_{\hat C}(m)<\infty$ for every $m\in \N$. Similar examplex can be constructed by considering relatively hyperbolic groups.
\end{example}

\begin{example}\label{ex1}
Let $G$ be the free abelian group of rank $2$, let $S=\{a,b\}$ be a generating set, and let $P$ be the cyclic subgroup generated by $b$. The coned-off Cayley graph $\hat \Gamma(G, P, S)$ is not fine since there are infinitely many circuits of length $6$ passing through the edge from $b$ to $P$. By Theorem~\ref{thm:3}, we have that $\FV^2_{\hat C}(m)=\infty$ for some $m\in \N$. In fact, one can verify that $\FV^2_{\hat C}(6)=\infty$.
\end{example}

\begin{remark} Theorem~\ref{thm:main} does not hold for $d=0$ in the natural setting of defining $\FV_{X}^1$  by taking  $Z_0(X,\Z)$ to be the the kernel of the augmentation map. Consider a finitely generated infinite group $G$ acting properly and cocompactly on a conneted graph $X$; for example, take $X$ the Cayley graph of $G$ with respect to a finite generating set. Then $X$ is infinite, and the formal difference $\gamma=b-a$ between two distinct vertices $a$ and $b$ of $X$ is a $0$-cycle for which $|\gamma|_\partial$ can be made  arbitrary large by taking $a$ and $b$ sufficiently appart; roughly speaking, a $1$-chain $\mu$ such that $\partial \mu = b-a$ contains a combinatorial edge path from $a$ to $b$ and hence $\|\mu\|_1$ is at least the length of the shortest edge path from $a$ to $b$. Henceforth $\FV^1_X(2)=\infty$ in this case. 
\end{remark}

\bibliographystyle{plain}

\end{document}